\def\cal#1{\mathcal{#1}}
\def\NZQ{\Bbb}               
\def\NN{{\NZQ N}}
\def\QQ{{\NZQ Q}}
\def\PP{{\NZQ P}}
\def\PP{{\NZQ P}}
\def\frk{\frak}               
\def\mm{{\frk m}}
\def\opn#1#2{\def#1{\operatorname{#2}}} 
\opn\chara{char}
\opn\length{\ell}
\opn\pd{pd}
\opn\rk{rk}
\opn\projdim{proj\,dim}
\opn\rank{rank}
\opn\depth{depth}
\opn\grade{grade}
\opn\height{ht}
\opn\embdim{emb\,dim}
\opn\codim{codim}
\def\OO{\mathcal{O}}
\opn\Tr{Tr}
\opn\bigrank{big\,rank}
\opn\superheight{superheight}\opn\lcm{lcm}
\opn\trdeg{tr\,deg}%
\opn\reg{reg}
\opn\lreg{lreg}
\opn\div{div}
\opn\Div{Div}
\opn\WDiv{WDiv}
\opn\cl{cl}
\opn\Cl{Cl}
\opn\Spec{Spec}
\opn\Supp{Supp}
\opn\supp{supp}
\opn\Sing{Sing}
\opn\Ass{Ass}
\opn\Assh{Assh}
\opn\Min{Min}
\opn\Reg{Reg}
\opn\Ann{Ann}
\opn\Rad{Rad}
\opn\Soc{Soc}
\opn\Socle{Socle}
\opn\Ker{Ker}
\opn\Coker{Coker}
\opn\Im{Im}
\opn\Hom{Hom}
\opn\Mor{Mor}
\opn\Tor{Tor}
\opn\Ext{Ext}
\opn\End{End}
\opn\Aut{Aut}
\opn\id{id}
\opn\nat{nat}
\opn\pff{pf}
\opn\Pf{Pf}
\opn\GL{GL}
\opn\SL{SL}
\opn\mod{mod}
\opn\ord{ord}
\opn\Proj{Proj}
\opn\aff{aff}
\opn\con{conv}
\opn\relint{relint}
\opn\st{st}
\opn\lk{lk}
\opn\cn{cn}
\opn\core{core}
\opn\vol{vol}
\opn\link{link}
\opn\star{star}
\opn\gr{gr}
\def\pot#1#2{#1[\kern-0.28ex[#2]\kern-0.28ex]}
\opn\dirlim{\underrightarrow{\lim}}
\opn\inivlim{\underleftarrow{\lim}}
\let\tensor=\otimes
\let\iso=\cong
\let\to=\rightarrow
\let\To=\longrightarrow
\def\Implies{\ifmmode\Longrightarrow \else
     \unskip${}\Longrightarrow{}$\ignorespaces\fi}
\def\implies{\ifmmode\Rightarrow \else
     \unskip${}\Rightarrow{}$\ignorespaces\fi}
\def\iff{\ifmmode\Longleftrightarrow \else
     \unskip${}\Longleftrightarrow{}$\ignorespaces\fi}
\opn\H{H}
\opn\Pic{Pic}
\newtheorem{Theorem}{Theorem}
\newtheorem{Corollary}[Theorem]{Corollary}
\newtheorem{Lemma}[Theorem]{Lemma}
\newtheorem{Proposition}[Theorem]{Proposition}
\newtheorem{Remark}[Theorem]{Remark}
\newtheorem{Definition}[Theorem]{Definition}
\let\epsilon\varepsilon
\def\OO{{\cal O}} 
\opn\inii{in}
\opn\inim{inm}
\opn\set{set}
\def\pnt{{\raise0.5mm\hbox{\large\bf.}}}
\begin{document}

\title{On Kodaira type vanishing
for Calabi-Yau threefolds in positive characteristic}

\author{Yukihide Takayama}
\address{Yukihide Takayama, Department of Mathematical
Sciences, Ritsumeikan University, 
1-1-1 Nojihigashi, Kusatsu, Shiga 525-8577, Japan}
\email{takayama@se.ritsumei.ac.jp}

\def\Coh#1#2{H_{\mm}^{#1}(#2)}
\def\eCoh#1#2#3{H_{#1}^{#2}(#3)}

\newcommand{\AppTh}{Theorem~\ref{approxtheorem} }
\def\da{\downarrow}
\newcommand{\ua}{\uparrow}
\newcommand{\namedto}[1]{\buildrel\mbox{$#1$}\over\rightarrow}
\newcommand{\bdel}{\bar\partial}
\newcommand{\proj}{{\rm proj.}}

\newenvironment{myremark}[1]{{\bf Note:\ } \dotfill\\ \it{#1}}{\\ \dotfill
{\bf Note end.}}
\newcommand{\transdeg}[2]{{\rm trans. deg}_{#1}(#2)}
\newcommand{\mSpec}[1]{{\rm m\hbox{-}Spec}(#1)}

\newcommand{\tbf}{{{\Large To Be Filled!!}}}

\pagestyle{plain}
\maketitle

\def\gCoh#1#2#3{H_{#1}^{#2}\left(#3\right)}
\def\subsetneq{\raisebox{.6ex}{{\small $\; \underset{\ne}{\subset}\; $}}}
\opn\Exc{Exc}

\def\HHom{{\cal Hom}}

\begin{abstract}
We consider Calabi-Yau threefolds $X$ over an algebraically closed
field $k$ of characteristic $p>0$ that are not liftable to 
characteristic $0$ or liftable ones with $p=2$. 
It is unknown whether Kodaira vanishing holds 
for these varieties. In this paper, we give a lower bound 
of $h^1(X, L^{-1})=\dim_k H^1(X, L^{-1})$ if $L$ is an ample divisor 
with $H^1(X, L^{-1})\ne{0}$. Moreover, we  show that a Kodaira 
type vanishing holds if $X$ is a Schr\"oer variety \cite{Schr03}
or a Schoen variety \cite{Scho},
which extends the similar result 
given in \cite{TakCALG} for the Hirokado variety \cite{Hi99}.
We show that such kind of vanishing holds for Calabi-Yau 
manifold whose Picard variety has no $p$-torsion.
Also we show that a modified Raynaud-Mukai construction
\cite{TakPAMS} does not produce any counter-example to Kodaira
 vanishing.\\
2000 Mathematics Subject Classification: 14F17, 
14J32, 
14G17, 
\end{abstract}

\section{Introduction}

It is well known that K3 surfaces over an algebraically closed field
$k$ of characteristic $p>0$ are liftable to 
characteristic $0$ \cite{D81}, so that Kodaira vanishing theorem holds
by the celebrated result of
 Raynaud-Deligne-Illusie \cite{DI}.  However, some of
Calabi-Yau threefolds are non-liftable to characteristic~$0$ \cite{Hi99,
  Hi07, Hi08, Schr03, Scho, CvS, CS} and it is still open whether
Kodaira vanishing holds for these varieties. Moreover, the theorem by
Raynaud-Deligne-Illusie is not applicable in the case of $p=2$, even
for liftable Calabi-Yau threefolds.

Kodaira vanishing of K3 surfaces has much simpler
proof by S.~Mukai \cite{MuKino}. Although Mukai's 
proof cannot be extended to dimension larger than $2$,
we obtain, under a certain condition, a lower bound evaluation
$h^1(X, \OO_X(-D)) \geq \displaystyle{\frac{1}{6}}D^3$
for an ample divisor $D$
with $H^1(X, \OO_X(-D))\ne{0}$ if there is such $D$ (see Theorem~\ref{main2}). 
To prove this result,
we need to extend Miyaoka's pseudo-effectivity theorem of second Chern class
in characteristic~$0$ \cite{Miya76} to positive characteristic. 
This extension should be of independent interest  (see Theorem~\ref{main1}).

On the other hand, by the Mukai's idea we obtain a sufficient 
condition for a part of Kodaira vanishing of varieties $X$
of $\dim X\geq 2$ with $H^1(X,\OO_X)=0$ such as Calabi-Yau varieties.
Namely, for an ample divisor $D$
we have  $H^1(X, \OO_X(-D))=0$ if $H^0(X, \OO_X(D))\ne{0}$.
It turns out that 
by the result given in \cite{TakCALG} 
this condition can be weaken to $H^0(X, \OO_X(pD))\ne{0}$
when $X$ has no global 1-forms. One of such varieties
is the Hirokado variety \cite{Hi99} as observed  in \cite{TakCALG}.
In this paper, we show that a larger class of the varieties 
satisfies this condition such as 
Schr\"oer varieties
\cite{Schr03} and Schoen varieties 
\cite{Scho} (see Corollary~\ref{schrvar}).

By a Calabi-Yau threefold $X$, we mean a smooth projective 
variety with $K_X=0$ and $H^i(X,\OO_X)=0$ for $i=1,2$. 
For simplicity, we will refer to the situation
$H^1(X, \OO_X(-D))=0$ (or $\ne{0}$) as 
{$H^1$-Kodaira (non-)vanishing} in this paper.
In section~\ref{section:1}, we give an extension 
of Miyaoka's pseudo-effectivity theorem. Its application to 
Calabi-Yau threefolds with $H^1$-Kodaira non-vanishing will be presented
in section~\ref{section:2}. 
In section~\ref{section:3}, we show that 
$H^1$-Kodaira vanishing holds to some extent 
for Schr\"oer varieties and Schoen varieties
(Corollary~\ref{schrvar}).
We give a simple observation that such kind of 
vanishing always holds for a Calabi-Yau varietiy whose 
Picard group has no $p$-torsion (Theorem~\ref{noptorsion}).

We also show that counter-examples of Kodaira vanishing 
cannot be constructed with the modified Raynaud-Mukai construction
by giving an answer to the open problem given in \cite{TakPAMS}.

The author deeply thanks Adrian Langer for pointing out 
a serious error in the early version of Theorem~\ref{main1}.

\section{Pseudo-effectivity of $c_2$ for varieties with $K_X=0$
\label{section:1}}

The aim of this section is to prove an extension of 
Miyaoka's pseudo-effectivity theorem of $c_2$
(Theorem~6.1~\cite{Miya76}, see also Lecture~III of \cite{DMV97}) 
to positive characteristic.
In this extension, we have two obstructions. 
The first one is Bogomolov-Gieseker inequality and the second one 
is generic semipositivity of cotangent bundle, both of which 
are key ingredients of Miyaoka's original proof and we must assume
characteristic $0$. 
We avoid the former by using Langer's result on strongly semistability of 
vector bundles \cite{Lan04}
and the latter by posing a condition for
the Harder-Narasimhan slope of the (co)tangent bundle,
which is also considered in \cite{Lan04}.

Let $X$ be a smooth projective variety and 
${\frak B} = (H_1,\ldots, H_{n-1})$ with $H_i$, $i=1,\ldots, n-1$,
 ample line bundles on $X$. 
Recall that a torsion free sheaf $E$ on $X$ is 
{\em ${\frak B}$-strongly semistable} if all the Frobenius pull backs 
$(F^e)^*(E)$ $(e\geq 0)$
of $E$ are ${\frak B}$-semistable, i.e., for any proper subsheaf
$F\subset E$ we have $\mu(F)\leq \mu(E)$, 
where  $\mu(E) := \mu_{{\frak B}}(E) = 
\frac{c_1(E)H_1\cdots H_{n-1}}{\rank E}$.

For a strongly semistable sheaf, we have the following 
Bogomolov-Gieseker type inequality.

\begin{Theorem}[Theorem~01 \cite{Lan04}]
\label{LBGinequlaity}
Let $X$ be a smooth projective variety over an algebraically 
closed field $k$ of $\chara(k)=p>0$ 
and $n=\dim X\geq 2$ and $E$ a strongly semistable torsion-free sheaf. 
Then we have 
\begin{equation*}
    2\rank(E) c_2(E) \vert {\frak B}\vert 
   \geq  (\rank(E)-1)c_1^2(E)) \vert {\frak B}\vert 
\end{equation*}
where $\vert {\frak B}\vert = H_1\cdots H_{n-2}$
with ample line bundles $H_i$.
\end{Theorem}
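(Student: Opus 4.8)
The inequality to be proved is precisely the statement that the discriminant $\Delta(E) = 2\rank(E)\,c_2(E) - (\rank(E)-1)\,c_1^2(E)$ pairs non-negatively against $\vert{\frak B}\vert = H_1\cdots H_{n-2}$. The plan is to reduce the $n$-dimensional assertion to the case of a surface and then to establish the Bogomolov inequality there by transporting the semistability hypothesis along the Frobenius. First I would reduce to $\dim X = 2$ by restriction. Since $\vert{\frak B}\vert$ is a product of $n-2$ ample classes, I choose general members $D_i \in \vert mH_i\vert$ with $m \gg 0$ and set $S = D_1 \cap \cdots \cap D_{n-2}$; Bertini makes $S$ a smooth projective surface, and by the projection formula $\Delta(E)\cdot\vert{\frak B}\vert$ is a positive multiple of $\Delta(E|_S)$ computed on $S$ with respect to $H|_S$. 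The crucial input is a restriction theorem guaranteeing that $E|_S$ is again strongly $\mu$-semistable for a sufficiently divisible and general $S$; this is a Mehta--Ramanathan-type statement adapted to the Frobenius-stable setting, and it is one of the two places where positive characteristic forces genuinely new arguments. It then suffices to prove $\Delta(F)\geq 0$ for a strongly semistable torsion-free sheaf $F$ on a smooth projective surface $S$.

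On the surface I would pass to the endomorphism sheaf $\mathcal{E}nd(F) = F\otimes F^\vee$. One computes $c_1(\mathcal{E}nd F) = 0$ and $\operatorname{ch}_2(\mathcal{E}nd F) = -\Delta(F)$, so the goal becomes $\operatorname{ch}_2(\mathcal{E}nd F)\leq 0$. The key structural fact, replacing the characteristic-zero statement that tensor products of semistable sheaves are semistable (which fails in characteristic $p$), is that tensor products of \emph{strongly} semistable sheaves are again strongly semistable; hence $\mathcal{E}nd(F)$ is strongly semistable of slope $0$. The mechanism here is the additivity of the asymptotic Frobenius slopes, $L_{\max}(E\otimes F) = L_{\max}(E) + L_{\max}(F)$ and $L_{\min}(E\otimes F) = L_{\min}(E) + L_{\min}(F)$, together with the characterization of strong semistability as the coincidence $L_{\max} = L_{\min} = \mu_{\frak B}$. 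It therefore remains to show that any strongly semistable torsion-free sheaf $V$ of slope $\mu_{\frak B}(V) = 0$ on $S$ satisfies $\operatorname{ch}_2(V)\leq 0$.

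The last step is a Riemann--Roch estimate propagated along the Frobenius. Since $F^*$ multiplies $c_1$ by $p$ and $c_2$ by $p^2$, one has $\operatorname{ch}_2((F^e)^*V) = p^{2e}\operatorname{ch}_2(V)$, while each $(F^e)^*V$ remains semistable of slope $0$ and of the fixed rank $\rank(V)$. Riemann--Roch gives $\chi((F^e)^*V) = p^{2e}\operatorname{ch}_2(V) + O(p^e)$. On the other hand, an effective upper bound for $h^0$ depending only on the maximal slope and the rank bounds $h^0((F^e)^*V)$ and, via Serre duality, $h^2((F^e)^*V)$ \emph{uniformly in $e$}, because semistability fixes $\mu_{\max} = 0$ and the rank; hence $\chi((F^e)^*V)\leq h^0 + h^2$ stays bounded. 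If $\operatorname{ch}_2(V) > 0$ this contradicts the quadratic growth, so $\operatorname{ch}_2(V)\leq 0$, giving $\Delta(F)\geq 0$ and, by the reduction, the theorem. I expect the main obstacle to be exactly the two characteristic-$p$ inputs on which everything rests: the restriction theorem and the tensor-product property for \emph{strong} semistability, whose proofs require building the asymptotic-slope machinery and the effective $h^0$-bounds that replace the clean tensor/symmetric-power arguments available only in characteristic zero.
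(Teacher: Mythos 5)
The paper does not prove this statement at all --- it is quoted verbatim as Theorem~0.1 of \cite{Lan04} --- so your proposal can only be measured against Langer's own argument. Your surface step is essentially the standard one and is correct in outline: by additivity of $L_{max}$ and $L_{min}$ under tensor products, $\End(F)=F\otimes F^{\vee}$ is strongly semistable of slope $0$ with $\operatorname{ch}_2(\End F)=-\Delta(F)$, and Riemann--Roch applied to $(F^e)^*\End(F)$ forces $\Delta(F)\geq 0$ as soon as $h^0$ and $h^2$ grow more slowly than $p^{2e}$. One inaccuracy: the bounds you can cheaply get are $O(p^e)$, not ``uniform in $e$''. Semistability of $V_e=(F^e)^*\End(F)$ does give $h^0(V_e)\leq h^0(V_e|_C)$ for a fixed ample curve $C$ (the twist $V_e(-C)$ is semistable of negative slope, hence has no sections), but $\mu_{max}(V_e|_C)=\mu_{max}\bigl((F_C^e)^*(\End(F)|_C)\bigr)$ is only bounded by $p^e\,\mu_{max}(\End(F)|_C)$, because restriction does not preserve semistability in characteristic $p$. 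That is harmless, since $O(p^e)$ against $p^{2e}$ growth still closes the argument.

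The genuine gap is your reduction to surfaces. You need a \emph{single} complete-intersection surface $S$ such that $E|_S$ is strongly semistable, i.e.\ such that $((F^e)^*E)|_S=(F_S^e)^*(E|_S)$ is semistable for \emph{every} $e$ simultaneously. Mehta--Ramanathan, applied to the semistable sheaf $(F^e)^*E$, produces a non-effective degree bound $m_e$ that depends on $e$, and nothing makes the $m_e$ bounded; the ``Mehta--Ramanathan-type statement adapted to the Frobenius-stable setting'' on which your whole reduction rests is not something the literature provides. Nor can it be extracted a posteriori from Langer's effective restriction theorem, whose degree bound involves $\Delta((F^e)^*E)H_1\cdots H_{n-2}=p^{2e}\Delta(E)H_1\cdots H_{n-2}$ and so blows up with $e$ unless $\Delta(E)H_1\cdots H_{n-2}=0$; and letting the surface $S_e$ vary with $e$ destroys the Riemann--Roch contradiction, because $\chi(\OO_{S_e})$ and $\mu_{max}(\End(E)|_{C_e})$ then grow with the uncontrolled degree $m_e$. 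Langer's proof is organized precisely to sidestep this: he proves, by induction on dimension, an inequality valid for \emph{arbitrary} torsion-free sheaves, roughly of the shape $(\hbox{positive intersection number})\cdot\Delta(E)H_1\cdots H_{n-2}+r^2\bigl(L_{max}(E)-\mu(E)\bigr)\bigl(\mu(E)-L_{min}(E)\bigr)\geq 0$, in which restriction to hypersurfaces never needs to preserve semistability --- the defect terms absorb any instability of the restriction --- and the theorem you are asked to prove is then the special case $L_{max}(E)=\mu(E)=L_{min}(E)$. To salvage your outline you would have to either prove the strong-semistability restriction theorem (which would be a new result, and is exactly where the difficulty is concentrated) or restructure the argument as Langer does.
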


If $E$ is not strongly semistable,  we can take 
the Harder-Narasimhan filtration with regard to ${\frak B}$:
\begin{equation}
\label{HN}
   0=E_0 \subset E_1\subset\cdots\subset E_m =E\qquad G_i:= E_i/E_{i-1} \; (i=1,\ldots, m)
\end{equation}
then the components $G_i$ are semistable but not always strongly semistable.
We say that $E$ is {\em fdHN} (finite determinacy of the Harder-Narasimhan filtration)
if there exists $k_0\in\NN$ such that all the components of 
the filtration of the $k_0$-th Frobenius pullback
$(F^{k_0})^*E$ are strongly semistable.
Note that if $E_\bullet$ is the Harder-Narasimhan filtration of $(F^{k})^*E$
for some $k\geq k_0$, then $F^*(E_\bullet)$ is the 
Harder-Narasimhan filtration of $(F^{k+1})^*E$.

\begin{Theorem}[Theorem~2.7 \cite{Lan04}]
\label{fdHNproperty}
Every torsion-free sheaf is fdHN.
\end{Theorem}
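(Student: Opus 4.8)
The plan is to track how the slopes of the Harder--Narasimhan factors evolve under iterated Frobenius pullback and to show that after finitely many pullbacks neither the filtration nor its factor-slopes can move any further. Write $F$ for the absolute Frobenius and set $a_k := \mu_{\max}((F^k)^{*}E)/p^k$ and $b_k := \mu_{\min}((F^k)^{*}E)/p^k$. The entire argument hinges on a single-step instability estimate: there is a constant $C=C(X,{\frak B},\rank E)$ with $\mu_{\max}(F^{*}E)\le p\,\mu_{\max}(E)+C$ and $\mu_{\min}(F^{*}E)\ge p\,\mu_{\min}(E)-C$ for every torsion-free $E$. To produce this I would use the canonical (Cartier) connection $\nabla\colon F^{*}E\to F^{*}E\otimes\Omega_X$ carried by any Frobenius pullback: the maximal destabilizing subsheaf $V\subset F^{*}E$ either is $\nabla$-invariant, in which case it descends as $V=F^{*}V_0$ and has slope $p\,\mu(V_0)\le p\,\mu_{\max}(E)$, or it is caught by the induced filtration in which each slope drop is at most $\mu_{\max}(\Omega_X)$, giving $C=(\rank E-1)\max(0,\mu_{\max}(\Omega_X))$. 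This is the step that uses positive characteristic essentially; in characteristic $0$ no such connection exists.

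Granting the estimate, the first step is convergence. Since $F^{*}$ of the maximal destabilizing subsheaf of $(F^k)^{*}E$ embeds in $(F^{k+1})^{*}E$ with slope exactly $p\,\mu_{\max}((F^k)^{*}E)$ (the first Chern class scales by $p$ and the rank is unchanged), the sequence $a_k$ is non-decreasing; the estimate gives $a_{k+1}\le a_k+C/p^{k+1}$, so $a_k\le a_0+C/(p-1)$ and $a_k$ converges to a limit $L_{\max}(E)$. Dually $b_k$ is non-increasing and bounded below, converging to $L_{\min}(E)$. Applying the same reasoning to each Harder--Narasimhan factor of each $(F^k)^{*}E$ shows that every normalized factor-slope converges, and since the number of factors never exceeds $\rank E$, the normalized Harder--Narasimhan polygons converge to a limit polygon.

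The decisive step, and the main obstacle, is to upgrade convergence to genuine finite-step stabilization. Here I would argue that $L_{\max}(E)$, $L_{\min}(E)$ and the intermediate limit slopes are rational with denominators bounded in terms of $\rank E$ and the polarization: the candidate destabilizing subsheaves of the $(F^k)^{*}E$ form a bounded family, their first Chern classes lie in the finitely generated group $\mathrm{NS}(X)$, and pairing with $H_1\cdots H_{n-1}$ lands in a discrete subgroup of $\RR$, so dividing by ranks $\le\rank E$ bounds the denominators. A monotone sequence converging to a rational number with bounded denominator is eventually constant, so there is $k_0$ with $a_k$, $b_k$ and all intermediate normalized slopes constant for $k\ge k_0$. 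For such $k$ the gaps between the distinct un-normalized factor-slopes of $(F^k)^{*}E$ grow like $p^k$ and hence eventually dominate $C$; by the one-step estimate this forces $F^{*}$ of the Harder--Narasimhan filtration of $(F^k)^{*}E$ to already equal the Harder--Narasimhan filtration of $(F^{k+1})^{*}E$.

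Once the filtration of $(F^{k_0})^{*}E$ pulls back to that of $(F^{k_0+1})^{*}E$ without any further refinement, each quotient $G_i$ of the filtration of $(F^{k_0})^{*}E$ has $F^{*}G_i$ semistable, being a Harder--Narasimhan quotient of $(F^{k_0+1})^{*}E$; iterating, all $(F^{j})^{*}G_i$ are semistable, so each $G_i$ is strongly semistable. This is exactly the fdHN property, with the $k_0$ produced above, and it makes the remark preceding the statement automatic. I expect the instability estimate and the monotonicity to yield convergence almost for free; the genuinely delicate point is excluding an infinite sequence of ever-finer refinements, which is precisely where the finite generation of $\mathrm{NS}(X)$ and the boundedness of the destabilizing subsheaves are needed.
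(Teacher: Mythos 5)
Your overall skeleton (Cartier's canonical connection plus descent, monotonicity of the normalized slopes $a_k$, and an attempt to force finite-step stabilization) is the right family of ideas---it is what Langer does in Section~2 of \cite{Lan04}, which is the source this paper cites for the statement (the paper itself gives no proof). But two of your steps have genuine gaps. First, your one-step estimate takes the constant $C=(\rank E-1)\max(0,\mu_{\max}(\Omega^1_X))$ by arguing that, when the maximal destabilizing subsheaf $V\subset F^*E$ is not $\nabla$-invariant, the nonzero map $V\to (F^*E/V)\otimes\Omega^1_X$ costs at most $\mu_{\max}(\Omega^1_X)$ per step. That bound rests on $\mu_{\max}(A\otimes\Omega^1_X)\le\mu_{\max}(A)+\mu_{\max}(\Omega^1_X)$, i.e.\ subadditivity of $\mu_{\max}$ under tensor product, which in characteristic $p$ is only known when the factors are \emph{strongly} semistable (Ramanan--Ramanathan)---precisely the property being established, so the argument is circular. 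This is why Langer's estimates (his Theorem~2.3 and its corollaries) are phrased in terms of $L_{\max}(\Omega^1_X)$ rather than $\mu_{\max}(\Omega^1_X)$, and why they require a genuine bootstrap rather than a one-line connection argument.

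Second, and more seriously, your ``decisive step'' fails. The terms $a_k=\mu_{\max}((F^k)^*E)/p^k$ have denominators of size up to $\rank(E)\,p^k$, which are unbounded, so the assertion ``a monotone sequence converging to a rational number with bounded denominator is eventually constant'' does not apply: $a_k=1-p^{-k}$ is monotone, converges to the integer $1$, and is never constant. What you would need is that the \emph{terms} lie in a fixed discrete subgroup of $\RR$, and they do not. The boundedness appeal is also unjustified: the subsheaves $V_k\subset(F^k)^*E$ have degrees growing like $p^k$ (their slopes are approximately $p^kL_{\max}(E)$), so they do not form a bounded family, and finite generation of $\mathrm{NS}(X)$ gives nothing once you divide by $p^k$. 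Relatedly, your final step assumes that distinct normalized limit slopes stay distinct, so that un-normalized gaps grow like $p^k$; but the dangerous case---two consecutive Harder--Narasimhan slopes whose un-normalized difference stays bounded, hence whose normalized values converge to a common limit, permitting infinitely many refinements---is exactly what must be excluded, and your argument never touches it. Langer excludes it by induction on the rank, applying his $\mu$-versus-$L$ estimates to the destabilizing subsheaf and quotient (both of smaller rank); some quantitative input of that kind, not discreteness in $\mathrm{NS}(X)$, is what makes the proof close.
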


Recall also that in the decomposition $(\ref{HN})$, we have 
$\alpha_1 > \cdots > \alpha_m$ with $\alpha_i:= \mu(G_i)$
and we define $\mu_{min}(E):= \alpha_m$ and $\mu_{max}(E):= \alpha_1$. 
We say that $E$ is {\em generically ${\frak B}$-semi positive }
if $\mu_{\min}(E)\geq 0$, which is equivalent to say that 
$\mu_{max}(E^\vee) \leq 0$, i.e. the dual $E^\vee$ is 
{\em generically ${\frak B}$-semi negative} (cf. \cite{Miya76}).
Moreover, we define
\begin{equation*}
     L_{max}(E):= \lim_{k\to\infty}\frac{\mu_{max}((F^k)^*E)}{p^k}
\end{equation*}
and 
\begin{equation*}
     L_{min}(E) := \lim_{k\to\infty}\frac{\mu_{min}((F^k)^*E)}{p^k}.
\end{equation*}
Then we have 
$L_{max}(E)\geq \frac{\mu_{max}((F^k)^*E)}{p^k} \geq \mu_{max}(E)$
and 
$L_{min}(E) \leq \frac{\mu_{min}((F^k)^*E)}{p^k} \leq 
\mu_{min}(E)$ for all $k\geq 0$.
See \cite{Lan04} for 
the detail of $L_{max}$ and $L_{min}$.

Now we come to the main theorem in this section. The proof is 
an easy modification of Miyaoka's proof for $\chara(k)=0$ 
using the results of 
Langer.

\begin{Theorem}
\label{main1}
Let $X$ be a smooth projective variety of dimension $n=\dim X \geq 2$
with $K_X=0$ over an algebraically closed field $k$ of $\chara(k)=p>0$. 
Assume that $L_{min}(T_X)\geq 0$ or $L_{max}(T_X)\leq 0$
for the tangent bundle $T_X$.
Then for any ample divisors $H_1,\ldots, H_{n-2}$ 
we have $c_2(X)H_1\cdots H_{n-2}\geq 0$.
\end{Theorem}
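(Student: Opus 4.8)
The plan is to reduce the statement to the strongly semistable case and then feed the result into Langer's Bogomolov-Gieseker inequality (Theorem~\ref{LBGinequlaity}). First I would fix a polarization ${\frak B}=(H_1,\ldots,H_{n-2},H_{n-1})$ by adjoining to the given ample divisors an arbitrary ample $H_{n-1}$, so that slopes, the Harder-Narasimhan filtration, and $L_{min},L_{max}$ are all taken with respect to this ${\frak B}$. The structural input that makes everything work is that $K_X=0$ forces $c_1(T_X)=0$, hence $\mu(T_X)=0$. By the fdHN property (Theorem~\ref{fdHNproperty}) there is a $k_0\in\NN$ such that $E:=(F^{k_0})^*T_X$ carries a Harder-Narasimhan filtration $0=E_0\subset\cdots\subset E_m=E$ whose graded pieces $G_i$ are strongly semistable; writing $r_i=\rank G_i$ and $\mu_i=\mu(G_i)$, we have $\mu_1>\cdots>\mu_m$. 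Since the absolute Frobenius multiplies a codimension-$i$ cycle class by $p^i$, I get $c_1(E)=p^{k_0}c_1(T_X)=0$ and $c_2(E)=p^{2k_0}c_2(X)$.

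The key step is to show that the hypothesis forces $E$ to be strongly semistable. For $k\geq k_0$ the Harder-Narasimhan filtration of $(F^k)^*T_X$ is the Frobenius pullback of that of $E$ (as recorded just before Theorem~\ref{fdHNproperty}), so its slopes are $p^{k-k_0}\mu_i$; the limits defining $L_{min},L_{max}$ therefore stabilize, giving $L_{min}(T_X)=\mu_m/p^{k_0}$ and $L_{max}(T_X)=\mu_1/p^{k_0}$. On the other hand $c_1(E)=0$ yields $\sum_i r_i\mu_i=(\sum_i c_1(G_i))\,H_1\cdots H_{n-1}=0$. Now if $L_{min}(T_X)\geq 0$ then $\mu_m\geq 0$, so all $\mu_i\geq 0$; together with the vanishing weighted sum and $r_i>0$ this forces every $\mu_i=0$, whence the strict monotonicity $\mu_1>\cdots>\mu_m$ collapses to $m=1$. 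Symmetrically, if $L_{max}(T_X)\leq 0$ then $\mu_1\leq 0$, all $\mu_i\leq 0$, and again every $\mu_i=0$ and $m=1$. In either case $E=G_1$ is strongly semistable with $c_1(E)=0$.

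It then remains only to apply the Bogomolov-Gieseker inequality. Theorem~\ref{LBGinequlaity} applied to the strongly semistable sheaf $E$ gives
\[
   2\rank(E)\,c_2(E)\,|{\frak B}| \;\geq\; (\rank(E)-1)\,c_1^2(E)\,|{\frak B}| \;=\; 0,
\]
so that $c_2(E)\,H_1\cdots H_{n-2}\geq 0$. Substituting $c_2(E)=p^{2k_0}c_2(X)$ and dividing by $p^{2k_0}>0$ yields $c_2(X)\,H_1\cdots H_{n-2}\geq 0$, which is the assertion.

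The delicate point, and where I expect the real content to sit, is the bookkeeping around the polarization and the Frobenius scaling: one must make sure the hypothesis $L_{min}(T_X)\geq 0$ (or $L_{max}(T_X)\leq 0$) is read with respect to the same ${\frak B}$ that runs the argument, and that the identities $c_i((F^{k_0})^*T_X)=p^{ik_0}c_i(T_X)$ and $L_{min}(T_X)=\mu_m/p^{k_0}$ are justified from Langer's framework. Conceptually, the two obstructions of Miyaoka's characteristic-zero proof---the Bogomolov-Gieseker inequality and generic semipositivity of the cotangent bundle---are replaced respectively by Theorem~\ref{LBGinequlaity} and by the slope hypothesis feeding into the fdHN filtration, so beyond this bookkeeping the argument is indeed an easy transcription of Miyaoka's, the simplification coming entirely from $c_1(T_X)=0$.
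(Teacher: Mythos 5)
Your proof is correct, but it takes a genuinely different route from the paper's. The paper never observes that the hypothesis trivializes the Harder--Narasimhan filtration; instead it keeps the full filtration $0=E_0\subset\cdots\subset E_s=E$ of $E=(F^{k_0})^*T_X$, expands $2c_2(E)\vert{\frak B}\vert$ by the Whitney formula, applies Langer's inequality (Theorem~\ref{LBGinequlaity}) to each strongly semistable piece $G_i$ separately, bounds the resulting terms $c_1^2(G_i)\vert{\frak B}\vert$ by the Hodge index theorem (introducing auxiliary numbers $\beta_i$ defined by $\beta_i h^2\vert{\frak B}\vert=\frac{c_1(G_i)}{r_i}h\vert{\frak B}\vert$), and only at the very end invokes the slope hypothesis to guarantee that all $\beta_i$ have one sign, so that $-\sum_i r_i\beta_i^2\geq -(\sum_i r_i\beta_i)^2=0$. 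In other words, the paper transcribes Miyaoka's general machinery (built for $K_X$ nef), whereas you exploit $c_1(E)=0$ at the outset: the identities $L_{min}(T_X)=\mu_m/p^{k_0}$, $L_{max}(T_X)=\mu_1/p^{k_0}$ and $\sum_i r_i\mu_i=0$ force every HN slope to vanish, hence $m=1$, and a single application of Theorem~\ref{LBGinequlaity} finishes. Your route is shorter, avoids the Hodge index theorem entirely, and makes explicit that under $K_X=0$ the hypothesis is simply equivalent to strong semistability of $T_X$ --- a fact the paper records only in passing, and only for the branch $L_{min}(T_X)\geq 0$, in the remark after its proof via Langer's Corollaries 6.2 and 6.3. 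Two minor points in your favor: your Frobenius scaling $c_2((F^{k_0})^*T_X)=p^{2k_0}c_2(X)$ is the correct one (the paper writes $p^{k_0}$, which is immaterial since only the sign matters), and the polarization bookkeeping you flag --- that $L_{min}$, $L_{max}$ must be read with respect to the same ${\frak B}$ extending the given $H_1,\ldots,H_{n-2}$ --- is a genuine ambiguity in the theorem statement that the paper's own proof shares, since it likewise completes the polarization with auxiliary ample classes $D$ and $h$.
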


\begin{proof}[Proof of Theorem~\ref{main1}]
Let ${\frak B} = H_1,\ldots, H_{n-2}$ be a sequence of 
$\QQ$-ample divisors and an ample  $\QQ$-divisor $D$.
By Theorem~\ref{fdHNproperty}, there exists $k_0\in\NN$ such that 
$E:= (F^{k_0})^*T_X$ has the Harder-Narasimhan filtration
with regard to $({\frak B}, D)$:
\begin{equation*}
0=E_0\subset E_1\subset \cdots \subset E_s=E
\qquad G_i:= E_i/E_{i-1} \; (i=1,\ldots, s)
\end{equation*}
whose components are all strongly $({\frak B},D)$-semistable.
Then we have $c_1(E) = \sum_i c_1(G_i)$ and 
\begin{eqnarray*}
 2c_2(E)\vert {\frak B}\vert
 & = & \{2\sum_i c_2(G_i) + 2\sum_{i<j} c_1(G_i)c_1(G_j)\}\vert {\frak B}\vert\\
 & = & \{2\sum_i c_2(G_i) + c_1^2(E) - \sum_i c_1^2(G_i)\}\vert {\frak B}\vert
\end{eqnarray*}
where $\vert {\frak B}\vert = H_1\cdots H_{n-2}$.
Now since $c_1(E) = c_1((F^{k_0})^*(T_X)) = (F^{k_0})^*c_1(T_X)
= (F^{k_0})^* c_1(-K_X) = 0$ and $G_i$ are strongly $({\frak B}, D)$-semistable
we obtain by Theorem~\ref{LBGinequlaity}
\begin{eqnarray*}
2c_2(E)\vert {\frak B}\vert
& \geq & \left[\sum_i \frac{r_i-1}{r_i}c_1^2(G_i) - \sum_i c_1^2(G_i)\right]
          \vert {\frak B}\vert
 =   - \sum_i\frac{1}{r_i}c_1^2(G_i)\vert {\frak B}\vert \\
\end{eqnarray*}
where $r_i=\rank G_i$.
Let $h = th_0$ be an ample $\QQ$-Cartier divisor, where $h_0$ 
is ample and $0<t\in \QQ$ is sufficiently small. 
Define $\beta_i$ by $\beta_i h^2\vert {\frak B}\vert
= \frac{c_1(G_i)}{r_i}h\vert {\frak B}\vert$.
Set $\Delta:= \beta_i h \vert{\frak B}\vert 
- \frac{c_1(G_i)}{r_i}\vert{\frak B}\vert$
and $H := h\vert {\frak B}\vert$. 
Then we have $\Delta .H=0$.
Since 
we can assume that $\vert{\frak B}\vert$ is an irreducible 
complete surface, we have 
$\Delta^2.H^2 \leq (\Delta.H)^2 = 0$ 
by a variant of 
Hodge index theorem (Theorem~1.6.1~\cite{LaI04}, Excercise~V.1.9 \cite{H})
and thus $\Delta^2 \leq 0$ since $H$ is ample.
Hence 
\begin{equation*}
c_1^2(G_i)\vert {\frak B}\vert = r_i^2(\beta_ih- \Delta)^2\vert{\frak B}\vert
           \leq r_i^2\beta_i^2h^2 \vert{\frak B}\vert
\end{equation*}
so that 
$2 c_2(E)\vert {\frak B}\vert
\geq  - \sum_ir_i\beta_i^2 h^2 \vert {\frak B}\vert$.
Now since we have $L_{min}(T_X)\geq 0$ or $L_{max}(T_X)\leq 0$, 
we have $\beta_i\geq 0$ for all $i$ or $\beta_i \leq 0$ 
for all $i$. Thus we have the inequality
$- \sum_ir_i\beta_i^2 h^2 \vert {\frak B}\vert
\geq -(\sum_ir_i\beta_i)^2 h^2 \vert {\frak B}\vert$
and we have $(\sum_ir_i\beta_i)h^2\vert{\frak B}\vert = \sum_i c_1(G_i)h\vert
{\frak B}\vert = c_1(E)h\vert{\frak B}\vert = 0$. 
Hence we have $c_2(E)\vert {\frak B}\vert\geq 0$
and then
\begin{equation*}c_2(E)\vert {\frak B}\vert
= c_2((F^{k_0})^*T_X)\vert {\frak B}\vert
= (F^{k_0})^*c_2(T_X)\vert {\frak B}\vert
= p^{k_0}c_2(T_X)\vert {\frak B}\vert \geq 0
\end{equation*}
and thus $c_2(X)\vert {\frak B}\vert \geq 0$ as required.
\end{proof}

We note that the condition ``$L_{min}(T_X)\geq 0$ or $L_{max}(T_X)\leq 0$''
is equivalent to ``$L_{max}(\Omega_X^1)\leq 0$ 
or $L_{min}(\Omega_X^1)\geq 0$''. We now consider what this condition
means. We have 

\begin{Proposition}[Cor.~6.2 and Cor.~6.3~\cite{Lan04}]
We have 
\begin{equation*}
\max(L_{max}(\Omega_X^1) - \mu_{max}(\Omega_X^1),\;
\mu_{min}(\Omega_X^1) - L_{min}(\Omega_X^1))
\leq \frac{n-1}{p}\max(L_{max}(\Omega_X^1),\; 0).
\end{equation*}
In particular, we have 
\begin{equation*}
   L_{max}(\Omega_X^1) \leq \frac{p}{p+1-n}\mu_{max}(\Omega_X^1)
\end{equation*}
if $L_{max}(\Omega_X^1)\geq 0$.
\end{Proposition}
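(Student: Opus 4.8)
The plan is to deduce this from Langer's general comparison between the limit slopes $L_{max}, L_{min}$ and the ordinary extremal slopes $\mu_{max}, \mu_{min}$, applied to the sheaf $\Omega_X^1$ itself. Concretely, Langer's Corollaries~6.2 and~6.3 give, for any torsion-free sheaf $E$ of rank $r$ on $X$,
\begin{equation*}
\max\bigl(L_{max}(E)-\mu_{max}(E),\; \mu_{min}(E)-L_{min}(E)\bigr)
\leq \frac{r-1}{p}\max\bigl(L_{max}(\Omega_X^1),\,0\bigr),
\end{equation*}
and the first displayed inequality is exactly the specialization $E=\Omega_X^1$, for which $r=\rank\Omega_X^1=n$. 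So the real content to reproduce is this general estimate, and I would argue it as follows.

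The engine is the canonical connection $\nabla$ on the Frobenius pullback $F^*E$ (with $p$-curvature zero), a $k$-linear map $\nabla\colon F^*E\to F^*E\tensor\Omega_X^1$ whose kernel recovers $E$ by Cartier descent. Take the Harder--Narasimhan filtration $0=V_0\subset\cdots\subset V_l=F^*E$ of $F^*E$ with semistable graded pieces $G_i=V_i/V_{i-1}$ of slopes $\mu(G_1)>\cdots>\mu(G_l)$. This filtration need not be $\nabla$-stable, and at the first step $i$ where $\nabla(V_i)\not\subset V_i\tensor\Omega_X^1$ the Leibniz rule shows the obstruction is an $\OO_X$-linear second fundamental form $V_i\to (F^*E/V_i)\tensor\Omega_X^1$. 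Restricting to the lowest-slope part and using semistability of the graded pieces yields $\mu(G_i)\leq\mu(G_{i+1})+\mu_{max}(\Omega_X^1)$, so each jump in the filtration is bounded by $\mu_{max}(\Omega_X^1)$; since there are at most $r$ steps, the total spread $\mu_{max}(F^*E)-\mu_{min}(F^*E)$ is at most $(r-1)\mu_{max}(\Omega_X^1)$ (and vanishes when $\Omega_X^1$ is negative enough, which is the reason for the $\max(\,\cdot\,,0)$ on the right).

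To pass from $F^*E$ to the limits I would run the same analysis on every iterate $(F^k)^*E$, using that Frobenius multiplies degrees, hence slopes, by $p$, and that for $k\geq k_0$ the Harder--Narasimhan filtration of $(F^{k+1})^*E$ is the Frobenius pullback of that of $(F^k)^*E$, as recorded after $(\ref{HN})$. Dividing the spread estimate for $(F^k)^*E$ by $p^k$ and letting $k\to\infty$ replaces $\mu_{max}$ and $\mu_{min}$ on the left by $L_{max}(E)$ and $L_{min}(E)$, turns the $\mu_{max}(\Omega_X^1)$ on the right into $L_{max}(\Omega_X^1)$, and produces the denominator $p$ from the single extra twist needed to detect the instability. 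Making these per-step gap estimates uniform across all Frobenius twists, ensuring the limits exist via Langer's boundedness results, and carrying out the recursive bookkeeping so that the constant comes out as exactly $(r-1)/p$ is the technical heart, and is the step I expect to be the main obstacle.

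Finally, the ``in particular'' clause is pure rearrangement. Assuming $L_{max}(\Omega_X^1)\geq 0$, the first inequality with $E=\Omega_X^1$ reads $L_{max}(\Omega_X^1)-\mu_{max}(\Omega_X^1)\leq\frac{n-1}{p}L_{max}(\Omega_X^1)$, that is,
\begin{equation*}
\Bigl(1-\frac{n-1}{p}\Bigr)L_{max}(\Omega_X^1)\leq\mu_{max}(\Omega_X^1).
\end{equation*}
Since $1-\frac{n-1}{p}=\frac{p+1-n}{p}$, multiplying through by $\frac{p}{p+1-n}$, which is direction-preserving when $p+1-n>0$, gives $L_{max}(\Omega_X^1)\leq\frac{p}{p+1-n}\mu_{max}(\Omega_X^1)$, as claimed.
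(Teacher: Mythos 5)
The paper offers no proof of this Proposition at all: it is quoted verbatim from Langer (Cor.~6.2 and Cor.~6.3 of \cite{Lan04}), and the only argument the paper itself could be said to contain is the elementary rearrangement giving the ``in particular'' clause. That part of your proposal is correct, and you rightly flag the hidden hypothesis $p+1-n>0$ needed to divide by $1-\frac{n-1}{p}$ (the paper uses this implicitly too: its discussion right after the Proposition splits into the cases $p\geq \dim X$ and $\dim X>p$).

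Your attempt to reprove the main inequality, however, has a genuine gap, and in fact contains a step that fails in characteristic $p$. The second fundamental form argument gives only $\mu(G_i)\leq \mu_{max}\bigl((F^*E/V_i)\otimes\Omega_X^1\bigr)$; to pass from this to your per-step bound $\mu(G_i)\leq\mu(G_{i+1})+\mu_{max}(\Omega_X^1)$ you need subadditivity $\mu_{max}(A\otimes B)\leq\mu_{max}(A)+\mu_{max}(B)$, which is a characteristic-$0$ fact: in characteristic $p$ the tensor product of merely semistable sheaves need not be semistable (only \emph{strongly} semistable sheaves are preserved under tensor product, by Ramanan--Ramanathan), so this inequality can fail. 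This failure is exactly why Langer's statement has $L_{max}(\Omega_X^1)$ rather than $\mu_{max}(\Omega_X^1)$ on the right, and why his proof requires the restriction theorem and the additivity of $L_{max}$ under tensor products --- the deep content of \cite{Lan04}, which your sketch does not supply. Moreover, even granting your per-step bound, the iteration you describe sums a geometric series and produces $\frac{r-1}{p-1}\max(\mu_{max}(\Omega_X^1),0)$, not $\frac{r-1}{p}\max(L_{max}(\Omega_X^1),0)$; the sentence claiming the limit ``turns $\mu_{max}(\Omega_X^1)$ into $L_{max}(\Omega_X^1)$ and produces the denominator $p$'' is an assertion, not an argument, and you yourself identify it as the main obstacle. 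So the core of the Proposition is not established by the proposal; the honest route here is the one the paper takes, namely to invoke Langer's Corollaries 6.2 and 6.3 as known results.
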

Thus in the case of $L_{min}(T_X)\geq 0$, we have 
$L_{max}(\Omega_X^1) = \mu_{max}(\Omega_X^1)$ and 
$L_{min}(\Omega_X^1) = \mu_{min}(\Omega_X^1)$, which implies 
that $\Omega_X^1$ is strongly semistable.
On the other hand, if $L_{max}(T_X)\leq 0$, we have the constraint 
that $\frac{p}{p+1-n}\mu_{max}(\Omega_X)\geq 0$. Thus 
it suffices to have either (i) $p\geq \dim X$ and $\mu_{max}(\Omega_X^1)\geq 0$,
or (ii) $\dim X> p$ and $\mu_{max}(\Omega_X)\leq 0$.

\section{Calabi-Yau threefolds with Kodaira non-vanishing
\label{section:2}}
In this section, we consider an application of 
Theorem~\ref{main1}. We do not know whether 
Kodaira vanishing holds in general for Calabi-Yau threefolds.
But if there is a counter-example to Kodaira vanishing,
then the dimension of its non-vanishing cohomology has a 
certain lower bound, which we will show in this section.

The following corollary is a direct consequence 
of Theorem~\ref{main1}.
\begin{Corollary}
\label{cortomain1}
Let $X$ be a Calabi-Yau threefold with 
$L_{min}(T_X)\geq 0$ or $L_{max}(T_X)\leq 0$ and $D$ ample divisor.
Then $c_2(X).D\geq 0$.
\end{Corollary}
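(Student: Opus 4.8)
The plan is simply to invoke Theorem~\ref{main1}, after checking that a Calabi-Yau threefold meets all of its hypotheses. First I would observe that, by the definition adopted in this paper, a Calabi-Yau threefold $X$ is a smooth projective variety with $K_X=0$ satisfying $H^i(X,\OO_X)=0$ for $i=1,2$. In particular $n=\dim X=3\geq 2$, so the dimension requirement of Theorem~\ref{main1} is met, and the triviality of the canonical class gives exactly the hypothesis $K_X=0$ used there. The only remaining standing assumption of the theorem, namely $L_{min}(T_X)\geq 0$ or $L_{max}(T_X)\leq 0$, is posited directly in the statement of the corollary, so there is nothing further to verify.

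With these hypotheses in place, Theorem~\ref{main1} yields $c_2(X)H_1\cdots H_{n-2}\geq 0$ for any choice of ample divisors $H_1,\ldots,H_{n-2}$. Specializing to $n=3$, the product $H_1\cdots H_{n-2}$ reduces to a single ample divisor $H_1$, so the conclusion becomes $c_2(X).H_1\geq 0$ for every ample divisor $H_1$. Taking $H_1=D$ then gives $c_2(X).D\geq 0$, as required. I expect no genuine obstacle here: the entire substance of the corollary is already carried by Theorem~\ref{main1}, and the only point to record is the elementary bookkeeping that for a threefold the relevant intersection product involves exactly $n-2=1$ ample divisor, which we take to be the given ample $D$.
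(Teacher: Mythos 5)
Your proof is correct and matches the paper's approach exactly: the paper presents this corollary as a direct consequence of Theorem~\ref{main1}, with the only content being the specialization $n=3$, so that the product of $n-2=1$ ample divisors is the single divisor $D$.
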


The following key lemma is based on Mukai's idea presented
in  \cite{MuKino}, where he shows Kodaira vanishing 
for K3 surfaces and Enriques surfaces.

\begin{Lemma}
\label{mukaiLemma}
Let $X$ be a normal projective variety of $\dim X\geq 2$
over an algebraically closed field $k$ of $\chara(k)=p>0$
with $H^1(X,\OO_X)=0$ and $D$ an ample divisor. If $H^1(X, \OO_X(-D))\ne{0}$ 
then $H^0(X,\OO_X(D))=0$.
\end{Lemma}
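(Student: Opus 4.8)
The plan is to prove the contrapositive: assuming $H^0(X,\OO_X(D))\ne 0$, I will show that $H^1(X,\OO_X(-D))=0$. This is exactly Mukai's extension idea in cohomological form. A nonzero class $e\in H^1(X,\OO_X(-D))=\Ext^1(\OO_X,\OO_X(-D))$ gives a non-split extension $0\to\OO_X(-D)\to E\to\OO_X\to 0$; a section $s$ of $\OO_X(D)$ trivializes $\OO_X(-D)$ on the complement of its zero divisor, which (being the complement of an ample divisor) is affine, so the extension automatically splits there. The class $e$ measures the failure of that splitting to extend across the divisor, and the whole point is that this failure must be zero. Concretely I would pick $0\ne s\in H^0(X,\OO_X(D))$, let $A=\div(s)\in|D|$ be its effective (hence ample) zero divisor, and work with the structure sequence $0\to\OO_X(-D)\to\OO_X\to\OO_A\to 0$, where the first map is multiplication by $s$.

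First I would read off the cohomology. Since $D$ is ample and $X$ is an integral projective variety of dimension $\ge 1$, the anti-ample sheaf $\OO_X(-D)$ has no nonzero global sections, so $H^0(X,\OO_X(-D))=0$; and $H^1(X,\OO_X)=0$ by hypothesis. Hence the long exact sequence of the structure sequence collapses to the short exact sequence $0\to k\to H^0(A,\OO_A)\to H^1(X,\OO_X(-D))\to 0$, where the first map is restriction of constants. Therefore the entire statement reduces to the single claim $H^0(A,\OO_A)=k$: the effective ample divisor $A$ carries no global functions beyond the constants.

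To prove $H^0(A,\OO_A)=k$ I would argue in two steps. Because $A$ is an effective ample divisor and $\dim X\ge 2$, $A$ is connected, so the finite-dimensional $k$-algebra $H^0(A,\OO_A)$ is local, of the form $k\oplus N$ with $N$ its nilradical; it remains to show $N=0$. Writing $A=\sum_i m_iC_i$ and $A_j=\sum_i\min(j,m_i)C_i$, I would filter $\OO_A$ by the images of $\OO_X(-A_j)$, whose successive quotients are $\OO_X(-A_{j-1})|_{B_j}$ with $B_j$ a reduced divisor of dimension $\ge 1$. The $j=1$ quotient is $\OO_{A_{\mathrm{red}}}$ and contributes exactly the constants, so it suffices that each quotient with $j\ge 2$, being the restriction of a negative twist to a positive-dimensional projective scheme, has no global sections. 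The step I expect to be the main obstacle is precisely this vanishing $H^0(B_j,\OO_X(-A_{j-1})|_{B_j})=0$ in the non-reduced case, since the intermediate divisors $A_{j-1}$ need not themselves be ample. Here I would exploit that distinct prime components meet non-negatively, together with the ampleness of the total divisor $A$, to verify that $A_{j-1}$ has positive degree on every component of $B_j$, so that its negative restricts to a bundle with no sections. It is at this point that the hypothesis $\dim X\ge 2$ is essential — so that the $B_j$ are positive-dimensional and anti-ample restrictions kill their sections — mirroring the role of $\dim X\ge 2$ in Mukai's original surface argument.
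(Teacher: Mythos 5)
Your opening reduction is correct, and it is a genuinely different route from the paper's: the paper never passes to the divisor of a section, but runs Mukai's Frobenius argument (Enriques--Severi--Zariski vanishing to arrange $H^1(X,\OO_X(-p^{\nu+1}D))=0$, then the sheaf $B_X=\OO_X/\OO_X^p$ and the injection $g\mapsto g^p\eta$ of $H^0(X,\OO_X(p^\nu D))$ into $H^0(X,B_X)$, which is exactly where the hypothesis $H^1(X,\OO_X)=0$ enters). Your long exact sequence correctly shows the lemma is equivalent to the single assertion $H^0(A,\OO_A)=k$ for $A=\div(s)$. But that assertion is where all the content lies, and your proposed proof of it fails at precisely the step you flagged. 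The claim that ampleness of $A$ forces $A_{j-1}$ to have positive degree on every component of $B_j$ is false. Counterexample: on the Hirzebruch surface $X=\FF_3=\PP(\OO_{\PP^1}\oplus\OO_{\PP^1}(-3))$, let $C_0$ be the negative section ($C_0^2=-3$), $f$ a fiber, and $C_2,C_3$ two distinct irreducible curves in $|C_0+4f|$. Put $A=2C_0+4C_2+4C_3$. Numerically $A\equiv 10C_0+32f$, which is ample (since $32>3\cdot 10$), yet for $j=2$ one has $C_0\subset B_2$ while $A_1=A_{\mathrm{red}}\equiv 3C_0+8f$ gives $A_1.C_0=-9+8=-1<0$; indeed $\OO_X(-A_1)|_{C_0}\iso\OO_{\PP^1}(1)$ has a two-dimensional space of sections. (In this example $H^0(B_2,\OO_X(-A_1)|_{B_2})$ does still vanish, but only because a section must also vanish identically on $C_2$ and $C_3$, hence on the degree-two divisor $(C_2+C_3)|_{C_0}$; that is a connectedness-propagation argument across the components of $B_j$, not degree positivity component by component.)

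The failure is structural, not a repairable slip inside your filtration. Note that your argument nowhere uses $H^1(X,\OO_X)=0$, nor $\chara(k)=p>0$; if it worked it would prove unconditionally that $H^0(A,\OO_A)=k$ for every effective ample divisor on every normal projective variety of dimension $\geq 2$. On smooth surfaces that statement is in fact true, but it is Ramanujam's lemma: every decomposition $A=A_1+A_2$ into nonzero effective parts has $A_1.A_2\geq 1$ (this follows from the Hodge index theorem), and numerically $1$-connected divisors satisfy $H^0(\OO_A)=k$ --- and the proof of the latter is exactly the maximal-subdivisor/connectedness argument that your componentwise scheme avoids. In dimension $\geq 3$, and for merely normal $X$, no such unconditional statement is at your disposal, and the paper's proof genuinely needs the Frobenius mechanism together with $H^1(X,\OO_X)=0$, which identifies $H^0(X,B_X)$ with $\Ker\left(F^*:H^1(X,\OO_X)\to H^1(X,\OO_X)\right)=0$. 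So to salvage your route you would have to prove $H^0(A,\OO_A)=k$ by a Ramanujam-type connectedness argument valid for non-reduced ample divisors on normal varieties of arbitrary dimension $\geq 2$; as written, the key vanishing step is unsupported and its stated justification is false.
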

\begin{proof}
Since $X$ is normal and $\dim X\geq 2$, we have 
$H^1(X, \OO_X(-\ell D))=0$ for $\ell\gg 0$
by lemma of Enriques-Severi-Zariski (Corollary~III.7.8~\cite{H}). 
Thus by considering 
the sequence 
\begin{equation*}
H^1(X, \OO_X(-D))\overset{F^*}{\To} H^1(X,\OO_X(-pD)) 
\overset{F^*}{\To} H^1(X,\OO_X(-p^2D))\To\cdots
\end{equation*}
where $F^*$ is the map induced by the absolute Frobenius morphism,
we can find $\nu\in\NN$ such that 
$0\ne H^1(X, \OO_X(-p^\nu D))\overset{F^*}{\To}H^1(X,\OO_X(-p^{\nu+1}D))=0$.
Now set $\tilde{D} := p^\nu D$, which is also ample and 
$H^1(X, \OO_X(-\tilde{D}))
\ne{0}$, and 
\begin{equation*}
 F^* : H^1(X, \OO_X(-\tilde{D})) \To H^1(X, \OO_X(-p\tilde{D}))(=0)
\end{equation*}
is not injective. 
Now we define $B_X(-\tilde{D})$ by the exact sequence
\begin{equation*}
  0\To \OO_X(-D) \overset{F^*}{\To} \OO_X(-p\tilde{D}) \To B_X(-\tilde{D}) \To 0.
\end{equation*}
From this we obtain the long exact sequence
\begin{equation*}
0 \To H^0(X, B_X(-\tilde{D}))
  \To H^1(X, \OO_X(-\tilde{D})) \overset{F^*}{\To} H^1(X, \OO_X(-p\tilde{D}))
\end{equation*}
and we have $H^0(X, B_X(-\tilde{D}))\ne{0}$.  Now we choose an element
$(0\ne) \eta \in H^0(X, B_X(-\tilde{D}))$.

On the other hand, we define $B_X$ by the short exact sequence
\begin{equation*}
    0\To \OO_X \overset{F^*}{\To} \OO_X \To B_X \To 0
\end{equation*}
so that we have $B_X = \OO_X/\OO_X^p$. From this and $k = H^0(X,\OO_X)$
since $k$ is algebraically closed,
we obtain the
long exact sequence
\begin{equation*}
0\To k \To k \To H^0(X,B_X) \To H^1(X,\OO_X) \overset{F^*}{\To} H^1(X,\OO_X)
\end{equation*}
Thus we have $H^0(B_X) \iso \Ker F^* : H^1(X, \OO_X)\to H^1(X, \OO_X)$.
Now by 
considering the injection
\begin{equation*}
   H^0(X, \OO_X(\tilde{D})) \To H^0(X, B_X) \qquad g\longmapsto g^p\eta
\end{equation*}
we have 
\begin{equation*}
    h^0(X, \OO_X(\tilde{D})) \leq \dim_k(\Ker F^*: H^1(X, \OO_X)\to H^1(X,\OO_X))
\leq h^1(X,\OO_X)=0
\end{equation*}
so that 
$h^0(X, \OO_X(\tilde{D})) = h^0(X, \OO_X(pD))=0$ 
hence $h^0(x, \OO_X(D))=0$ as required.
\end{proof}

Now we prove the main theorem in this section.

\begin{Theorem}
\label{main2}Let $X$ be a Calabi-Yau threefold over an algebraically 
closed field $k$ of $\chara(k)=p>0$. Assume that 
we have $L_{min}(T_X)\geq 0$ or $L_{max}(T_X)\leq 0$ and 
there is 
an ample divisor $D$ such that $H^1(X, \OO_X(-D))\ne{0}$. 
Then we have 
\begin{equation*}
h^1(X, \OO_X(-D))\geq h^{2}(X, \OO_X(-D)) + \frac{1}{6}D^3.
\end{equation*}
In particular, $h^1(X, \OO_X(-D))\geq \frac{1}{6}D^3$.
\end{Theorem}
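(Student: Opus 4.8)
The plan is to compute $\chi(X,\OO_X(-D))$ by Hirzebruch--Riemann--Roch and then pin down each individual cohomology group. First I would write the Riemann--Roch formula for a line bundle $L$ on the smooth projective threefold $X$, which is available in arbitrary characteristic. Since $X$ is Calabi--Yau we have $K_X=0$, hence $c_1(X)=0$, so every Todd-class contribution involving $c_1(X)$ drops out and the formula collapses to
\begin{equation*}
\chi(X,L)=\frac{1}{6}c_1(L)^3+\frac{1}{12}c_1(L)\cdot c_2(X).
\end{equation*}
Specializing to $L=\OO_X(-D)$, where $c_1(L)=-D$, this yields
\begin{equation*}
h^0-h^1+h^2-h^3=\chi(X,\OO_X(-D))=-\frac{1}{6}D^3-\frac{1}{12}D\cdot c_2(X),
\end{equation*}
writing $h^i=h^i(X,\OO_X(-D))$.

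Next I would eliminate the two outer cohomology groups. Since $D$ is ample and $\dim X>0$, a nonzero global section of $\OO_X(-D)$ would produce an effective divisor linearly equivalent to $-D$; intersecting with a suitable power of an ample class contradicts ampleness, so $h^0(X,\OO_X(-D))=0$. For $h^3$ I would invoke Serre duality: because $K_X=0$,
\begin{equation*}
H^3(X,\OO_X(-D))\iso H^0(X,\OO_X(D))^{\vee},
\end{equation*}
and Lemma~\ref{mukaiLemma}, applied using $H^1(X,\OO_X)=0$ together with the standing hypothesis $H^1(X,\OO_X(-D))\ne 0$, forces $H^0(X,\OO_X(D))=0$. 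Hence $h^3=0$ as well.

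With $h^0=h^3=0$ the Riemann--Roch identity rearranges into the exact equality
\begin{equation*}
h^1(X,\OO_X(-D))=h^2(X,\OO_X(-D))+\frac{1}{6}D^3+\frac{1}{12}D\cdot c_2(X).
\end{equation*}
Finally, the hypothesis $L_{min}(T_X)\geq 0$ or $L_{max}(T_X)\leq 0$ is precisely what lets me apply Corollary~\ref{cortomain1}, giving $c_2(X)\cdot D\geq 0$. Discarding this nonnegative term produces the asserted inequality, and the ``in particular'' statement is then immediate from $h^2\geq 0$.

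In this proof the computation itself is routine bookkeeping; the genuine content is imported from the earlier sections. The single real obstacle is the nonnegativity $c_2(X)\cdot D\geq 0$, and this is exactly the point at which the positive-characteristic extension of Miyaoka's pseudo-effectivity theorem enters, through Corollary~\ref{cortomain1} and Theorem~\ref{main1}; this is why the slope hypothesis on $T_X$ cannot be removed. The only other point to verify is that both Hirzebruch--Riemann--Roch and Serre duality are being used over a field of characteristic $p$, but both hold for smooth projective varieties over an arbitrary field, so no difficulty arises there.
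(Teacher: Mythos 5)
Your proof is correct and follows essentially the same route as the paper: Riemann--Roch for threefolds with $c_1(X)=0$, Serre duality, Lemma~\ref{mukaiLemma} to kill $h^0(X,\OO_X(D))$, and Corollary~\ref{cortomain1} to discard the term $\frac{1}{12}D\cdot c_2(X)$. The only cosmetic difference is that you compute $\chi(\OO_X(-D))$ directly and dualize just the $h^3$ term, whereas the paper computes $\chi(\OO_X(D))$ and dualizes all of its cohomology groups; the two bookkeepings are equivalent.
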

\begin{proof}
Since $K_X=0$,  we have $\chi(\OO_X)=0$ by Serre duality and then
\begin{equation*}
\chi(\OO_X(D)) = \frac{1}{6}D^3 + \frac{1}{12}D.c_2(X)
\end{equation*}
by Riemann-Roch for threefolds. 
Again by Serre duality together with $K_X=0$, we have 
$\chi(\OO_X(D)) = h^0(X, \OO_X(D)) - h^2(X, \OO_X(-D)) + h^1(X,\OO_X(-D))$.
Thus we have 
\begin{equation*}
h^1(X,\OO_X(-D)) = h^2(X,\OO_X(-D)) - h^0(X,\OO_X(D)) + \frac{1}{6}D^3 + 
\frac{1}{12}D.c_2(X)
\end{equation*}
then we apply  Corollary~\ref{cortomain1} and  Lemma~\ref{mukaiLemma}.
\end{proof}

\section{Kodaira vanishing for Calabi-Yau threefolds
\label{section:3}}

In this section, we consider
the modified Raynaud-Mukai construction introduced 
in \cite{TakPAMS}, which 
was regarded as a candidate to construct 
counter-example to Kodaira vanishing for Calabi-Yau
threefolds.
Also we consider $H^1$-Kodaira vanishing 
under certain conditions. 

\subsection{modified Raynaud-Mukai construction}

In \cite{TakPAMS}, the author considered possibility to 
construct counter-examples to Kodaira vanishing theorem 
using the Raynaud-Mukai construction \cite{Ray,MuKino,Mu11} and
introduced a modified version of the construction.
This construction
produces uniruled varieties and we have 
\begin{Proposition}[Corollary~II.6.3~\cite{Ko96}]
\label{uniruledKV}
Let $X$ be a smooth projective variety over a field $k$ of $\chara(k)=p>0$.
Assume that $X$ is not uniruled. Let $L$ be an ample divisor on $X$
such that $(p-1)L - K_X$ is ample. Then $H^1(X, L^{-1})=0$.
\end{Proposition}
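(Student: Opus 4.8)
The plan is to prove the contrapositive: assuming $H^1(X, L^{-1})\ne 0$ together with ampleness of $(p-1)L-K_X$, I would show that $X$ is uniruled. The first step converts the cohomological hypothesis into a positive differential datum, exactly along the lines of Lemma~\ref{mukaiLemma}. Since the argument uses Serre vanishing one works with $\dim X\ge 2$ (as in the intended applications): the lemma of Enriques--Severi--Zariski gives $H^1(X,\OO_X(-p^kL))=0$ for $k\gg 0$, so iterating the Frobenius-induced maps $F^*$ there is a largest $\nu$ with $0\ne\tilde\eta:=(F^\nu)^*\eta\in H^1(X,\OO_X(-A))$ and $F^*\tilde\eta=0$, where $A:=p^\nu L$ is ample. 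Feeding $\tilde\eta$ into the sequence $0\to\OO_X(-A)\overset{F^*}{\To}\OO_X(-pA)\to B_X(-A)\to 0$ as in Lemma~\ref{mukaiLemma} produces a nonzero $\xi\in H^0(X,B_X(-A))$. Using the Cartier embedding $B_X\hookrightarrow F_*\Omega_X^1$ together with the projection formula, $\xi$ corresponds to a nonzero section of $\Omega_X^1\otimes\OO_X(-pA)$, i.e.\ a nonzero map $\OO_X(pA)\to\Omega_X^1$ whose image consists of locally exact (hence $p$-closed) $1$-forms, with $pA$ ample.

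Next I would exploit that this is not merely an ample subsheaf of $\Omega_X^1$ but a $p$-closed one. By the Rudakov--Shafarevich/Ekedahl theory of height-one foliations, the twisted form $\xi$ determines a saturated, $p$-closed subsheaf $\mathcal{F}\subset T_X$ of corank one (the annihilator of $\xi$), with purely inseparable quotient $\pi\colon X\to Y:=X/\mathcal{F}$, where $\dim Y=n$ and $\pi$ factors the Frobenius. Dualizing $\OO_X(pA)\hookrightarrow\Omega_X^1$, the rank-one quotient $T_X/\mathcal{F}$ is generically $\OO_X(-pA)$, hence anti-ample along curves. The hypothesis that $(p-1)L-K_X$ be ample is what I would feed into the canonical bundle formula for the inseparable quotient, of the shape $K_X\equiv\pi^*K_Y+(p-1)\,D_{\mathcal F}$ with $D_{\mathcal F}$ governed by the positive bundle $A$: it forces $-K_Y$, restricted to the image of a general complete-intersection curve, to be positive, so that $Y$ is Fano-like in the relevant numerical sense.

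Finally I would invoke the Miyaoka--Mori uniruledness criterion and bend-and-break. The positivity of $-K_Y$ along a covering family means a general complete-intersection curve $C\subset Y$ satisfies $-K_Y\cdot C>0$; Frobenius-twisting the source curve makes the space of morphisms $\operatorname{Mor}(C,Y)$ through a fixed general point positive-dimensional, and breaking off a component yields a rational curve through a general point of $Y$. Thus $Y$ is uniruled, and since $\pi\colon X\to Y$ is a universal homeomorphism, $X$ is uniruled as well. This contradicts the hypothesis, whence $H^1(X,L^{-1})=0$.

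I expect the genuinely delicate step to be the middle one: passing from the positive $p$-closed $1$-form to an honest covering family of curves with strictly positive anticanonical degree. In characteristic $0$ one would simply quote Kodaira vanishing, but here the entire content is that the Frobenius/foliation machinery --- rather than Hodge theory --- supplies the rational curves. In particular, keeping the numerical bookkeeping honest, so that the ampleness of $(p-1)L-K_X$ is \emph{exactly} the inequality rendering the quotient $Y$ sufficiently Fano-like, is where the argument must be carried out with care; the case $K_X=0$ (Calabi--Yau) is instructive, since there the condition reduces to the automatic ampleness of $(p-1)L$ and all the positivity must be extracted from the foliation term.
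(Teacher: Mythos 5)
The paper contains no proof of Proposition~\ref{uniruledKV}: it is quoted as Corollary~II.6.3 of Koll\'ar \cite{Ko96}, and the citation is the paper's entire argument, so your reconstruction is to be measured against the cited proof rather than against anything in the text. What you propose is, in outline, exactly that proof: your first step is the same Frobenius/Enriques--Severi--Zariski device as the paper's Lemma~\ref{mukaiLemma}, and the rest (annihilator foliation, purely inseparable quotient, bend-and-break) is how Koll\'ar's uniruledness criterion is established. Your numerical bookkeeping is also correct: with $A=p^\nu L$ and $T_X/\mathcal{F}\subset \OO_X(-pA)$ one gets $c_1(\mathcal{F})=-K_X+pA+E$ with $E$ effective, hence $-\pi^*K_Y=p\bigl((p-1)A-K_X\bigr)+(p-1)E$ (up to the usual codimension-two caveats, which general complete intersection curves avoid), and $(p-1)A-K_X=\bigl((p-1)L-K_X\bigr)+(p-1)(p^\nu-1)L$ is ample, which is precisely what bend-and-break needs. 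Three small remarks. First, as you observed, $\dim X\geq 2$ is indispensable, and the statement as printed in the paper omits it: for an elliptic curve $X$ and ample $L$ one has $(p-1)L-K_X=(p-1)L$ ample and $H^1(X,L^{-1})\iso H^0(X,L)^\vee\ne 0$, yet $X$ is not uniruled. Second, $Y=X/\mathcal{F}$ is normal but typically singular, so the deformation estimate for $\Mor(C,Y)$ and the breaking argument must be run for curves lying in the smooth locus of $Y$; this is harmless because images of general complete intersection curves miss the codimension~$\geq 2$ singular locus. Third, uniruledness passes from $Y$ back to $X$ not because $\pi$ is a universal homeomorphism per se, but because $\pi$ fits into a factorization $X\to Y\to X^{(p)}$ of the absolute Frobenius, so $Y$ uniruled forces $X^{(p)}$, hence $X$, uniruled. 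None of these affects the substance: your proposal is a correct reconstruction of the argument behind the citation.
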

Hence for the  projective varieties $X$ with $K_X=0$,
counter-examples of Kodaira vanishing could exist only when $X$ is uniruled.
It is shown that 
we could construct a counter-examples 
if we consider a modified version
of the construction assuming a smooth surface $X$ such that 
(i) $\chara(k)=p=2$ and $3K_X$ is ample with $H^1(X, \OO_X(-3K_X))\ne{0}$,
or (ii) $\chara(k)=p=3$ and $2K_X$ is ample with $H^1(X, \OO_X(-2K_X))\ne{0}$.
(cf. Corollary~3.4~\cite{TakPAMS}).

However, it turned out that such a surface does not exist,
which is a direct consequence of the following facts.

\begin{Theorem}[cf. \cite{Eke88}]
Let $X$ be a smooth minimal surface of general type over an algebraically
closed field $k$ of $\chara(k)=p>0$. Then 
for any $\ell >0$ $H^1(X, -\ell K_X)=0$ except 
when $\ell=1$, $p=2$ $\chi(\OO_X)=1$ and $X$ is 
an inseparable double cover of a K3-surface
or a rational surface.
\end{Theorem}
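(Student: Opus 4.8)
The plan is to reduce the statement to a pluricanonical vanishing, feed the pluricanonical divisor into Proposition~\ref{uniruledKV}, and observe that the numerical condition in that proposition fails for exactly one pair $(p,\ell)$, which is precisely the case handled by Ekedahl. First I would record the two structural facts we have for free: since $X$ is minimal of general type its Kodaira dimension is $2$, so $X$ is \emph{not} uniruled in any characteristic, and $K_X$ is nef and big. Moreover, by Serre duality on the smooth projective surface $X$ (with $\omega_X=\OO_X(K_X)$) one has $H^1(X,-\ell K_X)\iso H^1(X,(\ell+1)K_X)^\vee$, so the asserted vanishing is equivalent to the pluricanonical vanishing $H^1(X,mK_X)=0$ for every $m=\ell+1\geq 2$; this interpretation makes transparent why the exceptional value $\ell=1$ is the \emph{bicanonical} borderline.

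Next I would apply Proposition~\ref{uniruledKV} with $L=\ell K_X$. The hypothesis to check is the positivity of $(p-1)L-K_X=\bigl((p-1)\ell-1\bigr)K_X$, i.e. that the coefficient satisfies $(p-1)\ell-1\geq 1$, equivalently $(p-1)\ell\geq 2$. This holds for all $\ell\geq 1$ when $p\geq 3$, and for all $\ell\geq 2$ when $p=2$; it fails \emph{only} for $(p,\ell)=(2,1)$. In every non-exceptional case, $(p-1)L-K_X$ is a positive multiple of a nef and big divisor, and Proposition~\ref{uniruledKV} yields $H^1(X,-\ell K_X)=0$, as required. The one technical point to verify here is that Proposition~\ref{uniruledKV}, stated for $L$ \emph{ample}, can be used when $K_X$ is only nef and big: I would descend to the canonical model $\pi\colon X\to X_{\mathrm{can}}$, on which $K_{X_{\mathrm{can}}}$ is ample and $X_{\mathrm{can}}$ has only rational double points. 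Since $\pi$ is crepant with $R^i\pi_*\OO_X=0$, the cohomology of $\pm\ell K$ agrees upstairs and downstairs, and the Frobenius/cyclic-cover argument of \cite{Ko96} underlying Proposition~\ref{uniruledKV} is insensitive to canonical singularities; checking this extension is the routine-but-necessary bridge.

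It then remains to treat the single residual case $\chara(k)=p=2$, $\ell=1$, where $(p-1)L-K_X=0$ and the input from Proposition~\ref{uniruledKV} degenerates. Here the vanishing $H^1(X,-K_X)\iso H^1(X,2K_X)^\vee=0$ can genuinely fail, and the content of the theorem is the precise dichotomy. A first reduction I would make is the Frobenius--Mukai bound: running the proof of Lemma~\ref{mukaiLemma} without discarding $H^1(X,\OO_X)$, the injection $g\mapsto g^p\eta$ gives $h^0(X,p^\nu K_X)\leq \dim_k\ker\bigl(F^*\colon H^1(X,\OO_X)\to H^1(X,\OO_X)\bigr)\leq h^1(X,\OO_X)$ for some $\nu\geq 0$; combined with Riemann--Roch this already forces $\chi(\OO_X)$ to be small and pins the surface to the Noether boundary $\chi(\OO_X)=1$. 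The exact identification of the surviving surfaces as inseparable double covers of a K3 or a rational surface is the deep part and is exactly Ekedahl's structure theory of the bicanonical pencil in characteristic $2$, so at this step I would invoke \cite{Eke88}.

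The main obstacle, therefore, is entirely concentrated in this last case $(p,\ell)=(2,1)$: both showing that vanishing persists whenever $X$ is \emph{not} the special inseparable double cover, and exhibiting the failure in that case, rely on the characteristic-$2$ classification of \cite{Eke88} rather than on the elementary vanishing machinery used above. The two earlier steps (Serre-dual reduction and the application of Proposition~\ref{uniruledKV} through the canonical model) are comparatively formal; the genuinely hard input is Ekedahl's analysis, which is why the statement is phrased as a consequence of \cite{Eke88}.
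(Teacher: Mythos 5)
The paper offers no proof of this theorem: it is imported wholesale from Ekedahl, which is exactly what the tag ``cf.~\cite{Eke88}'' signals. Judged as an independent argument, your proposal has a fatal gap at its very first step, namely the assertion that a minimal surface of general type is ``not uniruled in any characteristic.'' In characteristic $p>0$ uniruledness does \emph{not} imply Kodaira dimension $-\infty$, because the parametrizing map $Y\times\PP^1\dashrightarrow X$ may be inseparable. Shioda's Fermat surfaces of degree $d\geq 5$ in $\PP^3$ are unirational, hence uniruled, whenever $p^\nu\equiv -1\pmod{d}$ for some $\nu$ (for instance the Fermat quintic in characteristic $2$, which is smooth and minimal with $K_X$ ample and $\chi(\OO_X)=5$). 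For such $X$ the hypothesis of Proposition~\ref{uniruledKV} fails, so your reduction says nothing about them, yet Ekedahl's theorem asserts $H^1(X,-\ell K_X)=0$ for these surfaces for \emph{every} $\ell>0$, since they are not on the exceptional list. Handling inseparably uniruled surfaces of general type is precisely the hard content of \cite{Eke88}; it cannot be reached by the bend-and-break vanishing criterion, whose entire mechanism is to produce rational curves and contradict their absence. (Tellingly, the exceptional surfaces in the statement --- inseparable double covers of rational surfaces --- are themselves unirational, which shows that the uniruled case is where all the action is.)

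There are two further, smaller problems. First, even for non-uniruled $X$, the divisor $L=\ell K_X$ is only nef and big, since $K_X$ need not be ample on a minimal model; your fix of descending to the canonical model lands on a surface with rational double points, where Proposition~\ref{uniruledKV} --- stated and proved for \emph{smooth} varieties via deformation theory of morphisms and bend-and-break --- is not available, and dismissing this extension as ``routine'' is not justified. Second, for the residual case $(p,\ell)=(2,1)$ you invoke \cite{Eke88} outright, which is what the paper does for the entire statement, and your sketch that the Frobenius--Mukai bound ``pins the surface to $\chi(\OO_X)=1$'' is an assertion, not an argument. In sum, the cases your method genuinely settles form a proper subclass of the theorem (non-uniruled surfaces with ample canonical class and $(p,\ell)\neq(2,1)$), and everything outside that subclass is exactly the substance of Ekedahl's result.
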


\begin{Lemma} For a smooth projective surface $X$ over an algebraically
closed field $k$ of $\chara(k)\geq 0$,  
$K_X$ being nef implies that 
$X$ is minimal, i.e,. any birational morphism $f: X\to Y$ is an isomorphism.
\end{Lemma}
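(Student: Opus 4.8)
The plan is to prove the contrapositive: if $f\colon X\to Y$ is a birational morphism that is \emph{not} an isomorphism, then $K_X$ cannot be nef. First I would invoke the structure theory of birational morphisms between smooth projective surfaces: any such $f$ factors as a composition of blow-downs of $(-1)$-curves (this is the standard factorization theorem, e.g.\ Hartshorne~V.5.5~\cite{H}). In particular, if $f$ is not an isomorphism, then $X$ contains at least one $(-1)$-curve, i.e.\ a smooth rational curve $E\iso\PP^1$ with $E^2=-1$.

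The key computation is then to observe that such a curve violates nefness of $K_X$. By the adjunction formula on a smooth surface, for an irreducible curve $E$ we have $2p_a(E)-2 = E^2 + K_X.E$, where $p_a(E)$ is the arithmetic genus. Since $E$ is a smooth rational curve, $p_a(E)=0$, and since $E^2=-1$, this yields
\begin{equation*}
K_X.E = -2 - E^2 = -2-(-1) = -1 < 0.
\end{equation*}
Thus $K_X$ has negative intersection with the effective curve $E$, so $K_X$ is not nef, which is the desired contradiction.

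Assembling these two steps gives the result: assuming $K_X$ is nef, no $(-1)$-curve can exist on $X$, hence by the factorization of birational morphisms every birational $f\colon X\to Y$ must be an isomorphism, which is exactly the definition of $X$ being minimal. I would state the argument in this contrapositive form since it is cleanest.

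The main obstacle is not really mathematical depth but rather invoking the factorization theorem at the correct level of generality. The classical statement of Castelnuovo's contractibility criterion and the decomposition of birational morphisms into blow-downs is usually phrased over an algebraically closed field, and the excerpt already assumes $k$ algebraically closed, so this should apply directly; I would simply cite Hartshorne Chapter~V. The one point requiring a little care is that the definition of ``minimal'' used here is the absence of nontrivial birational morphisms \emph{out of} $X$, so I must make sure the factorization theorem is applied to morphisms originating at $X$ (producing $(-1)$-curves \emph{on} $X$), which is indeed the correct direction. The adjunction computation itself is entirely routine.
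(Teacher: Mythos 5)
Your proposal is correct and follows essentially the same route as the paper: both argue the contrapositive, use the factorization of a non-isomorphic birational morphism into blow-ups to produce a $(-1)$-curve $E$ on $X$, and then show $K_X.E=-1<0$. The only (cosmetic) difference is the final computation: you obtain $K_X.E=-1$ from adjunction ($2p_a(E)-2=E^2+K_X.E$), while the paper uses the blow-up formula $K_X=f^*K_Y+E$ together with $f^*K_Y.E=0$; both are routine and equally valid.
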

\begin{proof} 
If $X$ is not minimal, then there exists 
a non-isomorphic birational morphism $f:X\to Y$. We can assume that 
$f$ is the blow-up at a point $P\in Y$. Then by Castelnuovo's contraction
theorem we have $K_X = f^*K_Y + C$ with a $(-1)$-curve $C\subset X$.
Hence $K_X.C = C.C = -1 <0$ and $K_X$ is not nef.
\end{proof}

\subsection{weak $H^1$-Kodaira vanishing}

As an immediate consequence of Lemma~\ref{mukaiLemma} we have 

\begin{Corollary}
\label{weakH1kv}
Let $X$ be a Calabi-Yau threefold over an algebraically closed field
$k$ of $\chara(k)=p>0$. Then we have $H^1(X, \OO_X(-D))=0$
for any ample divisor $D$ with $H^0(X, \OO_X(D))\ne{0}$.
\end{Corollary}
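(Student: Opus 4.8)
The plan is to derive Corollary~\ref{weakH1kv} directly from Lemma~\ref{mukaiLemma} by a simple contrapositive argument, checking only that the hypotheses of the lemma are met. First I would verify that a Calabi-Yau threefold $X$ satisfies all the standing assumptions of Lemma~\ref{mukaiLemma}: by definition $X$ is a smooth projective variety over an algebraically closed field $k$ of $\chara(k)=p>0$, hence in particular normal; its dimension is $3\geq 2$; and the Calabi-Yau condition includes $H^1(X,\OO_X)=0$. Thus all hypotheses of the lemma hold, and Lemma~\ref{mukaiLemma} applies to any ample divisor $D$ on $X$.

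Next I would run the contrapositive. Lemma~\ref{mukaiLemma} states that for an ample divisor $D$, if $H^1(X,\OO_X(-D))\ne 0$ then $H^0(X,\OO_X(D))=0$. Contrapositively, if $H^0(X,\OO_X(D))\ne 0$, then we cannot have $H^1(X,\OO_X(-D))\ne 0$, so $H^1(X,\OO_X(-D))=0$. This is exactly the assertion of the corollary: for any ample divisor $D$ with $H^0(X,\OO_X(D))\ne 0$ we conclude $H^1(X,\OO_X(-D))=0$.

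There is essentially no obstacle here; the entire content of the corollary is packaged in Lemma~\ref{mukaiLemma}, and the only thing to confirm is that the word ``Calabi-Yau threefold'' encodes the three ingredients the lemma needs (normality via smoothness, $\dim\geq 2$, and $H^1(X,\OO_X)=0$). The one point I would take mild care with is making explicit that smoothness implies normality, so that the normality hypothesis of the lemma is genuinely satisfied rather than merely assumed. Beyond that, the proof is a one-line logical inversion of the lemma, and no further computation or geometric input is required.

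\begin{proof}
A Calabi-Yau threefold $X$ is smooth, hence normal, of dimension $3\geq 2$, and satisfies $H^1(X,\OO_X)=0$ by definition. Therefore the hypotheses of Lemma~\ref{mukaiLemma} are satisfied for any ample divisor $D$ on $X$. By that lemma, if $H^1(X,\OO_X(-D))\ne 0$ then $H^0(X,\OO_X(D))=0$. Taking the contrapositive, if $H^0(X,\OO_X(D))\ne 0$ then $H^1(X,\OO_X(-D))=0$, which is the assertion.
\end{proof}
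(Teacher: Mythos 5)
Your proof is correct and matches the paper exactly: the paper presents Corollary~\ref{weakH1kv} as an ``immediate consequence of Lemma~\ref{mukaiLemma},'' which is precisely your contrapositive argument. Your extra care in noting that smoothness gives normality and that the Calabi-Yau definition supplies $H^1(X,\OO_X)=0$ is a sound, if routine, verification of the lemma's hypotheses.
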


We have a more refined result when
there is no global 1-form.

\begin{Definition}
Let $X$ be a projective variety over an algebraically closed field $k$
of characteristic $p>0$.
We say that {\em weak $H^1$-Kodaira vanishing} holds for $X$ if 
we have $H^1(X, \OO_X(-D))=0$ for every ample divisor 
$D$ with $H^0(X, \OO_X(pD))\ne 0$.
\end{Definition}

\begin{Theorem}[cf. Theorem~9 \cite{TakCALG}]
\label{takCALGmain}
Let $k$ be an algebraically closed field of characteristic $p>0$
and  $X$ a Calabi-Yau threefold  over $k$. 
If $H^0(X,\Omega_X^1)=0$, then weak $H^1$-Kodaira vanishing 
holds for $X$.
\end{Theorem}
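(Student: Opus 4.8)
The plan is to upgrade Corollary~\ref{weakH1kv} by a single Frobenius step, paying for that step with the hypothesis $H^0(X,\Omega_X^1)=0$. First I would note that the hypothesis $H^0(X,\OO_X(pD))\neq 0$ already yields, via Corollary~\ref{weakH1kv} applied to the ample divisor $pD$, the vanishing $H^1(X,\OO_X(-pD))=0$. So the whole problem reduces to climbing one Frobenius step, from $pD$ down to $D$: I must show that $H^1(X,\OO_X(-pD))=0$ together with $H^0(X,\OO_X(pD))\neq0$ forces $H^1(X,\OO_X(-D))=0$.

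Arguing by contradiction, suppose $H^1(X,\OO_X(-D))\neq0$. As in the proof of Lemma~\ref{mukaiLemma}, I would use the Frobenius short exact sequence
\begin{equation*}
0\To \OO_X(-D)\overset{F^*}{\To}\OO_X(-pD)\To B_X(-D)\To 0 .
\end{equation*}
Since $-pD$ is anti-ample we have $H^0(X,\OO_X(-pD))=0$, and since $H^1(X,\OO_X(-pD))=0$ the associated long exact sequence gives $H^0(X,B_X(-D))\iso H^1(X,\OO_X(-D))\neq0$. I then pick $0\neq\eta\in H^0(X,B_X(-D))$.

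The heart of the argument, and the step where $H^0(X,\Omega_X^1)=0$ finally enters, is to identify $B_X(-D)$ with a twist of the sheaf $B_X^1:=\Im(d\colon\OO_X\to\Omega_X^1)$ of locally exact $1$-forms. Tensoring the Cartier sequence $0\to\OO_X\to F_*\OO_X\to F_*B_X^1\to0$ by $\OO_X(-D)$ and applying the projection formula (using $F^*\OO_X(-D)=\OO_X(-pD)$) identifies the cokernel sequence above with
\begin{equation*}
0\To\OO_X(-D)\To F_*\OO_X(-pD)\To F_*\bigl(B_X^1\otimes\OO_X(-pD)\bigr)\To 0 ,
\end{equation*}
so that $B_X(-D)\iso F_*(B_X^1\otimes\OO_X(-pD))$ (the pushforward $F_*$ alters only the $\OO_X$-module structure, not global sections). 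As $B_X^1\subset\Omega_X^1$, this produces an injection $H^0(X,B_X(-D))\hookrightarrow H^0(X,\Omega_X^1(-pD))$, under which $\eta$ maps to a nonzero global $1$-form twisted down by $pD$.

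Finally I would cash in the hypothesis. A nonzero section of $\OO_X(pD)$ is a nonzero map $\OO_X(-pD)\to\OO_X$, which is injective since $X$ is integral, and tensoring with the locally free sheaf $\Omega_X^1$ gives $\Omega_X^1(-pD)\hookrightarrow\Omega_X^1$; hence $H^0(X,\Omega_X^1(-pD))\hookrightarrow H^0(X,\Omega_X^1)=0$, contradicting $\eta\neq0$. This forces $H^1(X,\OO_X(-D))=0$, as required. The hard part will be the middle identification: keeping track of the $p$-fold twist produced by the Frobenius pullback and verifying, through the Cartier/projection-formula bookkeeping, that $B_X(-D)$ is the $\OO_X(-pD)$-twist of the exact differentials (and not merely an $\OO_X(-D)$-twist). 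This is exactly the point at which the extra factor of $p$ in the hypothesis $H^0(X,\OO_X(pD))\neq0$, as opposed to $H^0(X,\OO_X(D))\neq0$ in Corollary~\ref{weakH1kv}, is consumed.
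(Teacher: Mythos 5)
Your proof is correct: reducing to a single Frobenius step via Corollary~\ref{weakH1kv} applied to $pD$, identifying the Frobenius cokernel $B_X(-D)$ with a twist of the exact-differentials sheaf $B^1_X\subset F_*\Omega^1_X$ (valid since $X$ is smooth, by the Cartier-type sequence $0\to\OO_X\to F_*\OO_X\to F_*B^1_X\to 0$), and then killing $H^0(X,\Omega^1_X(-pD))$ by the injection $\Omega^1_X(-pD)\hookrightarrow\Omega^1_X$ coming from the nonzero section of $\OO_X(pD)$ — this is exactly where the hypotheses $H^0(X,\Omega^1_X)=0$ and $H^0(X,\OO_X(pD))\neq 0$ are consumed. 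The paper itself gives no proof here (it cites Theorem~9 of \cite{TakCALG}), but your argument is essentially the standard one underlying that citation, assembled from the same tools the paper develops (Lemma~\ref{mukaiLemma}, Corollary~\ref{weakH1kv}, and the Frobenius/$B^1$ technique), so it matches the intended approach.
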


In \cite{TakCALG}, we applied Theorem~\ref{takCALGmain} to 
the Hirokado variety  \cite{Hi99}.  Now we consider more examples
which we can apply this theorem.

\begin{Lemma}
\label{fibratio}
Let $\pi:X\to \PP^1$ be a surjective morphism from a Calabi-Yau
threefold whose fibers are all smooth K3 surfaces 
or normal surfaces whose desingularization are smooth K3 surfaces.
Then $H^0(X, \Omega_X^1)=0$.
\end{Lemma}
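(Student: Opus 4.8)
The plan is to show that any global $1$-form on $X$ must vanish by analyzing how it interacts with the K3-fibration structure. The key point is that a smooth K3 surface $S$ has $H^0(S,\Omega_S^1)=0$, so a global $1$-form on $X$ can have no ``fiber component,'' and its ``horizontal component'' is controlled by the one-dimensional base $\PP^1$, which also has no nonzero global $1$-forms since $H^0(\PP^1,\Omega_{\PP^1}^1)=0$.

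\begin{proof}[Proof sketch]
First I would work over the open locus $U\subseteq\PP^1$ over which $\pi$ is smooth; write $X_U=\pi^{-1}(U)$. On $X_U$ the relative cotangent sequence
\begin{equation*}
0\To \pi^*\Omega_U^1 \To \Omega_{X_U}^1 \To \Omega_{X_U/U}^1 \To 0
\end{equation*}
is exact. Given $\omega\in H^0(X,\Omega_X^1)$, restrict it to $X_U$ and look at its image $\bar\omega$ in $H^0(X_U,\Omega_{X_U/U}^1)$. For each closed point $t\in U$ the restriction of $\bar\omega$ to the smooth K3 fiber $X_t$ lies in $H^0(X_t,\Omega_{X_t}^1)=0$, so $\bar\omega$ vanishes on every fiber; a standard argument (cohomology and base change, or semicontinuity applied to $\pi_*\Omega_{X_U/U}^1$) then forces $\bar\omega=0$. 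Hence $\omega|_{X_U}$ comes from $\pi^*\Omega_U^1=\pi^*\Omega_{\PP^1}^1|_U$, i.e. $\omega|_{X_U}=\pi^*\tau$ for some section $\tau$ of $\Omega_{\PP^1}^1$ over $U$. Because $\pi$ is surjective (so $\pi^*$ is injective on forms) and $H^0(\PP^1,\Omega_{\PP^1}^1)=0$ while the complement $\PP^1\setminus U$ is finite, $\tau$ extends to a global section of $\Omega_{\PP^1}^1$ and thus $\tau=0$, whence $\omega|_{X_U}=0$. Since $X$ is smooth and $X_U$ is dense, $\omega=0$, giving $H^0(X,\Omega_X^1)=0$.

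The main obstacle is handling the singular fibers, i.e. those normal surfaces $S$ whose desingularization $\tilde S$ is a smooth K3 surface. I would avoid fiberwise computation on the singular fibers altogether by only using the smooth locus $U$ above, as arranged in the previous paragraph; the singular fibers sit over the finite set $\PP^1\setminus U$ and contribute nothing since a $1$-form vanishing on a dense open set of a smooth (hence reduced and irreducible) variety vanishes identically. The one point demanding care is the claim that the image $\bar\omega$ in $H^0(X_U,\Omega_{X_U/U}^1)$ vanishes once it vanishes on each closed fiber: this is where I would invoke that $\pi_*\Omega_{X_U/U}^1=0$, which follows from $H^0(X_t,\Omega_{X_t}^1)=0$ for all $t$ together with flatness and base change for the smooth proper family $\pi|_{X_U}$. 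With this in hand the argument reduces cleanly to the vanishing of global $1$-forms on K3 surfaces and on $\PP^1$.
\end{proof}
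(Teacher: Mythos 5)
Your proposal follows the same basic strategy as the paper (relative cotangent sequence, fiberwise vanishing of $1$-forms on K3 surfaces via Rudakov--Shafarevich \cite{RS76}, then pushing forward to $\PP^1$), but it has a fatal gap: everything happens over the open locus $U\subseteq\PP^1$ where $\pi$ is smooth, and you tacitly assume $U$ is nonempty (and dense). In characteristic $0$ that would follow from generic smoothness, but this paper works in characteristic $p>0$, where generic smoothness fails, and the hypothesis of the lemma explicitly allows \emph{every} fiber --- including the general one --- to be a non-smooth normal surface whose desingularization is K3. This is not a hypothetical case: the Remark following Corollary~\ref{schrvar} points out that the Hirokado--Ito--Saito threefolds carry fibrations $\pi\colon X\to\PP^1$ whose \emph{general} fibers are normal, non-smooth surfaces with smooth K3 desingularization. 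For such a fibration $U=\emptyset$, so $X_U=\emptyset$ and your argument proves nothing (in particular the final step ``$X_U$ is dense'' collapses). A correct proof must engage the singular fibers rather than discard them; the paper does exactly this: it establishes the fiberwise vanishing $H^0(X_y,\Omega^1_{X_y})=0$ for \emph{all} $y$ --- for smooth fibers by Rudakov--Shafarevich, and for singular normal fibers because $H^0(\Omega^1)$ is unchanged under the birational map to the K3 desingularization --- and then applies Grauert/base change to the whole family to get $\pi_*\Omega^1_{X/\PP^1}=0$, hence $\pi_*\Omega^1_X\cong\Omega^1_{\PP^1}$ and $H^0(X,\Omega^1_X)=H^0(\PP^1,\Omega^1_{\PP^1})=0$.

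There is also a second, smaller gap. Your claim that $\tau\in H^0(U,\Omega^1_{\PP^1}|_U)$ ``extends to a global section of $\Omega^1_{\PP^1}$'' does not follow from the reasons you give (surjectivity of $\pi$, injectivity of $\pi^*$, finiteness of $\PP^1\setminus U$): sections of $\Omega^1_{\PP^1}$ over $U$ are rational forms with poles allowed on $\PP^1\setminus U$, e.g.\ $dt/t$ on $\PP^1\setminus\{0,\infty\}$, and most of them do not extend. The correct repair is to use that $\pi^*\tau=\omega|_{X_U}$ extends to the everywhere-regular form $\omega$ on $X$, and that a pole of $\tau$ at $p\in\PP^1\setminus U$ would force a pole of $\omega$ along the fiber $X_p$: since $X_p$ is normal, hence reduced, at a general smooth point of $X_p$ the function $s=t\circ\pi$ is a local parameter cutting out $X_p$, so $\pi^*(t^{-m}g\,dt)=s^{-m}\tilde g\,ds$ is non-regular, a contradiction. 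Note that reducedness is essential in characteristic $p$ (for a fiber of multiplicity $p$ one can have $\pi^*(dt/t)$ regular), so even this repair leans on the normality hypothesis. It is fixable --- but fixing it does not address the first issue, which is where the proposal genuinely fails.
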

\begin{proof}
Since $\pi$ is surjective, we have 
the short exact sequence
\begin{equation*}
0\To \pi^*\Omega_{\PP^1}^1 \To \Omega_X^1 \To \Omega_{X/\PP^1}^1\To 0
\end{equation*}
and the associated long exact sequence
\begin{equation*}
  0\To \pi_*\pi^*\Omega_{\PP^1}^1 \To \pi_*\Omega_X^1 \To \pi_*\Omega_{X/\PP^1}^1.
\end{equation*}
We first consider the case that $X$ is a smooth K3 pencil.
Since the fiber of $\pi$ is connected,
we have $\pi_*\OO_X = \OO_{\PP^1}$ and thus 
$\pi_*\pi^*\Omega_{\PP^1}^1 \iso \Omega_{\PP^1}^1$
by projection formula.
On the other hand, 
by Grauert isomorphism we have, for any $y\in \PP^1$, 
$\pi_*\Omega_{X/\PP^1}^1\tensor k(y)
\iso H^0(X_y, (\Omega_{X/\PP^1}^1)_y)
= H^0(X_y, (\Omega_{X_y}^1)$
and since $X_y$ is a smooth K3 surface
it is $0$ by the theorem of Rudakov-Shafarevich \cite{RS76}.
Thus we have $\pi_*\Omega_{X/\PP^1}^1=0$. Consequently, we have 
$\pi_*\Omega_X^1 \iso \Omega_{\PP^1}^1$ 
and $H^0(X, \Omega_X^1) = H^0(\PP^1, \pi_*\Omega_X^1)
= H^0(\PP^1, \Omega_{\PP^1}^1) = 0$ as required.
Now in the case that fibers are normal surfaces whose desingularizations 
are smooth K3, the same result holds since birational map does not
change $H^0(\Omega_X^1)$. 
\end{proof}

Recall that a Schr\"oer variety \cite{Schr03} is a pencil over $\PP^1$ of 
smooth supersingular K3 surfaces. Since a supersingular K3 surface
is unirational, Schr\"oer varieties are uniruled. Hence,
in view of Proposition~\ref{uniruledKV}, 
there is a possibility of $H^1$-Kodaira non-vanishing.
On the other hand, a Schoen variety \cite{Scho} is a desingularization 
of the fiber product of certain type of elliptic surfaces over $\PP^1$.
It is not known whether it is uniruled or not.
In both varieties, $p=2$ or $p=3$.

\begin{Corollary}
\label{schrvar}
Weak $H^1$-Kodaira vanishing holds for 
a Sch\"oer variety or a Schoen variety.
\end{Corollary}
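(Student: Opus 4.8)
The plan is to apply Theorem~\ref{takCALGmain} to each of these two classes of varieties, which reduces everything to verifying the single hypothesis $H^0(X,\Omega_X^1)=0$. Once that vanishing is established, weak $H^1$-Kodaira vanishing follows immediately from Theorem~\ref{takCALGmain}, since both Schr\"oer varieties and Schoen varieties are Calabi-Yau threefolds over an algebraically closed field of characteristic $p=2$ or $p=3$. So the whole proof is really a matter of recognizing that Lemma~\ref{fibratio} applies.

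For a Schr\"oer variety, the structure is by definition a pencil $\pi:X\to\PP^1$ whose fibers are smooth supersingular K3 surfaces. This is exactly the first case treated in Lemma~\ref{fibratio} (the smooth K3 pencil case), so I would simply invoke that lemma to conclude $H^0(X,\Omega_X^1)=0$. Note the lemma's proof only uses that the generic fiber is a smooth K3 surface and applies the Rudakov-Shafarevich vanishing $H^0(X_y,\Omega_{X_y}^1)=0$, which holds for supersingular K3 surfaces as well since it holds for all K3 surfaces. For a Schoen variety, which is a desingularization of a fiber product of elliptic surfaces fibered over $\PP^1$, I would need to check that the induced fibration has fibers that are (normal surfaces whose desingularizations are) smooth K3 surfaces, so that the second case of Lemma~\ref{fibratio} applies and again yields $H^0(X,\Omega_X^1)=0$.

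The main obstacle is the Schoen case rather than the Schr\"oer case. I would need to confirm that the fiber product construction in \cite{Scho} does give a K3 fibration over $\PP^1$ after desingularization, and that the fibers are of the type required by Lemma~\ref{fibratio}, namely normal surfaces whose desingularizations are smooth K3 surfaces; the lemma's remark that birational modifications do not change $H^0(\Omega_X^1)$ is what licenses passing between the singular fiber product and its resolution. The delicate point is that the desingularization must itself be a Calabi-Yau threefold admitting a morphism to $\PP^1$ with fibers of the stated form, so the verification is essentially a matter of unwinding the geometry of the Schoen construction and matching it against the hypotheses of Lemma~\ref{fibratio}.

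Assuming this geometric identification, the proof is then one line: in both cases $H^0(X,\Omega_X^1)=0$ by Lemma~\ref{fibratio}, hence by Theorem~\ref{takCALGmain} weak $H^1$-Kodaira vanishing holds for $X$.
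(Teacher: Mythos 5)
Your overall strategy (reduce everything to $H^0(X,\Omega_X^1)=0$ and then invoke Theorem~\ref{takCALGmain}) is exactly the paper's, and your treatment of the Schr\"oer case is correct and identical to the paper's: a Schr\"oer variety is a smooth pencil of supersingular K3 surfaces over $\PP^1$, so Lemma~\ref{fibratio} applies directly (Rudakov--Shafarevich holds for all K3 surfaces, supersingular ones included).

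The Schoen case, however, contains a genuine gap, and it is precisely the verification you deferred. You propose to check that the Schoen fibration $\pi:X\to\PP^1$ has fibers which are (normal surfaces whose desingularizations are) smooth K3 surfaces, so that the second case of Lemma~\ref{fibratio} applies. That check fails: a Schoen variety is a desingularized fiber product $Y\times_{\PP^1}Y'$ of semi-stable \emph{elliptic} surfaces, so the fiber of $\pi$ over a general point $t\in\PP^1$ is the product $E_t\times E'_t$ of two elliptic curves --- an abelian surface, not a K3 surface, and with $h^0(\Omega^1)=2\neq 0$. The paper states this explicitly (``For Schoen varieties, the fiber is not K3''). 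Consequently the fiberwise argument of Lemma~\ref{fibratio} cannot work for this fibration at all: $\pi_*\Omega^1_{X/\PP^1}$ does not vanish, since the Rudakov--Shafarevich vanishing is special to K3 surfaces. The paper sidesteps the fibration entirely in this case and instead quotes the vanishing $H^0(X,\Omega_X^1)=0$ directly from Proposition~11.5 of \cite{Scho}, which Schoen proves by a global analysis of the desingularized fiber product; only then is Theorem~\ref{takCALGmain} applied. So your proof of the corollary is complete for Schr\"oer varieties but not for Schoen varieties, where the missing ingredient is an external citation (or an independent proof) of $H^0(X,\Omega_X^1)=0$ rather than an application of Lemma~\ref{fibratio}.
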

\begin{proof}
Since Schr\"oer varieties are smooth K3 pencil over $\PP^1$, 
we can apply Lemma~\ref{fibratio} and 
Proposition~\ref{takCALGmain}. For Schoen varieties, 
the fiber is not K3 but 
we also have $H^0(X, \Omega_X^1)=0$ (Proposition~11.5~\cite{Scho})
and we can apply Proposition~\ref{takCALGmain}.
\end{proof}

\begin{Remark}{\em
There are other examples of non-liftable Calabi-Yau threefolds.
The examples given in \cite{Hi07, Hi08} with $p=2,3$,
which are desingularization
of fiber products of two quasi-elliptic surfaces, are uniruled.
So there is a possibility of weak $H^1$-Kodaira non-vanishing.
These examples have 
fibrations $\pi: X\To \PP^1$ whose general fibers are normal surfaces
with smooth K3 desingularization. The author does not know whether 
the singular fibers satisfy the condition of Lemma~\ref{fibratio}.

For other non-liftable Calabi-Yau threefolds, we have 
those given in \cite{CvS, CS}.  We do not know by now if 
they are uniruled or without global 1-forms.
Currently, we know at most 
the vanishing theorem as in Corollary~\ref{weakH1kv}.
}
\end{Remark}

Finally, we show another condition for weak $H^1$-Kodaira vanishing.
We use the following result by Oda, Lang-Nygaard \cite{Oda69, LN80}.

\begin{Theorem}[ Theorem~1.1~\cite{LN80}]
\label{oda69}
Let $X$ be a smooth proper variety over a perfect field $k$ of characteristic
$p>0$. Then there is an exact sequence
\begin{equation*} 
0\To H^1(X, \OO_X) \To  DM(\hbox{}_p\underline{Pic}(X))
\overset{\alpha}{\To}  H^0(X, \Omega_X^1)
\end{equation*}
where $DM(\hbox{}_p\underline{Pic}(X))$ is the dual Dieudonn\'{e}
module of the $p$-torsion of the Picard scheme. The image
of 
$\alpha$ is the set of indefinitely closed $1$-forms, 
 i.e., $0=d\omega = dC\omega = dC^2\omega
=\cdots$ for every $\omega\in \Omega_X^1$ where $C$ is the Cartier operator.
\end{Theorem}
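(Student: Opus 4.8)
Since the assertion is a statement in Dieudonn\'{e} theory, the plan is to compute the dual Dieudonn\'{e} module of the finite commutative group scheme $G:={}_p\underline{Pic}(X)$ and to read the sequence off from a resolution of $\mu_p$ by coherent sheaves carrying the Frobenius and the Cartier operator. First I would give $G$ a cohomological description. The flat Kummer sequence $0\To\mu_p\To\NZQ{G}_m\overset{p}{\To}\NZQ{G}_m\To 0$ yields, since $k$ is perfect and hence $k^\ast=(k^\ast)^p$, a canonical isomorphism $G\iso H^1_{fl}(X,\mu_p)$. As the dual Dieudonn\'{e} functor is an exact contravariant equivalence from finite commutative $p$-group schemes over $k$ to finite-length modules over the Dieudonn\'{e} ring, it then suffices to compute the underlying $k$-space of this flat cohomology group together with its $F$- and $V$-action.

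The computational heart is the logarithmic de Rham sequence. Writing $Z^1=\ker(d\colon\Omega_X^1\To\Omega_X^2)$ for the sheaf of closed $1$-forms and $C\colon Z^1\To\Omega_X^1$ for the Cartier operator, one has the exact sequence of fppf sheaves
\begin{equation*}
0\To\mu_p\To\NZQ{G}_m\overset{d\log}{\To}Z^1\overset{C-1}{\To}\Omega_X^1\To 0,
\end{equation*}
in which $d\log(x)=dx/x$ lands in $Z^1$, the logarithmic forms are precisely those fixed by $C$, and $C-1$ is surjective by the Artin--Schreier argument for the Cartier operator. I would split this into the short exact sequences $0\To\mu_p\To\NZQ{G}_m\To W\To 0$ and $0\To W\To Z^1\overset{C-1}{\To}\Omega_X^1\To 0$, with $W$ the sheaf of logarithmic $1$-forms, and then pass to flat cohomology. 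Since $\NZQ{G}_m$, $Z^1$ and $\Omega_X^1$ have cohomology computable in the Zariski topology, the resulting long exact sequences express $H^1_{fl}(X,\mu_p)$ through $H^1(X,\OO_X)$, $H^0(X,\Omega_X^1)$ and the operators $d$ and $C$.

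From here the desired sequence emerges. The subspace $H^1(X,\OO_X)$ is the Lie algebra of $\Pic(X)$ and enters as the infinitesimal, i.e.\ multiplicative and local--local, part of $DM(G)$, injecting via the connecting homomorphism; the map $\alpha$ is the residue onto $H^0(X,\Omega_X^1)$ induced by $d\log$ followed by $C$. The decisive point is to pin down $\Im(\alpha)$. A class in $DM(G)$ corresponds, through the tower obtained by iterating Frobenius on $G$, to a $1$-form whose successive Cartier transforms $\omega,C\omega,C^2\omega,\dots$ must all remain closed in order to lift compatibly to every level; conversely, each such indefinitely closed form is realised. This gives $\Im(\alpha)=\{\omega\in H^0(X,\Omega_X^1): d\omega=dC\omega=dC^2\omega=\cdots=0\}$, as asserted.

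The main obstacle, and the step demanding the most care, is exactly this identification of $\Im(\alpha)$ with the indefinitely closed forms: it rests on controlling the iterated Cartier operator and on using the finiteness of $G$ to guarantee that the a priori infinite system of lifts stabilises. One must also fix the Dieudonn\'{e} conventions so that the cohomological Frobenius and the Cartier operator correspond correctly to the abstract $F$ and $V$, since a covariant/contravariant mismatch would change which subquotient of the de Rham complex is seen. Once this dictionary is fixed, checking exactness at $DM(G)$ reduces to routine Cartier-operator bookkeeping.
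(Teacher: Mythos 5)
Your proposal founders at its first step, and the failure is not repairable by more careful bookkeeping of the same kind. The isomorphism you assert, $G\iso H^1_{fl}(X,\mu_p)$, identifies the finite group \emph{scheme} ${}_p\underline{Pic}(X)$ with an abstract cohomology group. What the flat Kummer sequence actually gives (using $k^*=(k^*)^p$) is $H^1_{fl}(X,\mu_p)\iso {}_p\Pic(X)$, the $p$-torsion of the abstract Picard group, i.e.\ the group of $k$-points $({}_p\underline{Pic}(X))(k)$. This sees only the \'etale part of ${}_p\underline{Pic}(X)$ and is blind to its connected (infinitesimal) part, which is precisely the part whose Dieudonn\'e theory produces the subspace $H^1(X,\OO_X)\subset DM({}_p\underline{Pic}(X))$ in the statement. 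Concretely, take $X$ a supersingular elliptic curve (smooth and proper, hence covered by the theorem): ${}_p\underline{Pic}(X)$ is infinitesimal, so your $H^1_{fl}(X,\mu_p)=0$, yet $DM({}_p\underline{Pic}(X))$ is two-dimensional and the theorem's sequence reads $0\To k \To DM \To k\To 0$; no $F$- and $V$-bookkeeping on the zero group can recover this. (Relatedly, $H^1_{fl}(X,\mu_p)$ is an $\FF_p$-module with no natural $k$-structure, so ``the underlying $k$-space of this flat cohomology group'' is not defined.) The repair requires working with the fppf sheaf $R^1f_*\mu_p$ on $\Spec k$ --- which \emph{is} the group scheme ${}_p\underline{Pic}(X)$ --- and then computing the Dieudonn\'e module of that group scheme; this computation is the actual content of Prop.~5.7 and Cor.~5.12 of \cite{Oda69} (via de Rham cohomology), and it is exactly the part your outline replaces by the false identification.

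There are two further problems. First, your ``logarithmic de Rham sequence'' is not exact: the kernel of $d\log\colon {\NZQ G}_m\To Z^1$ is the subsheaf of $p$-th powers $({\NZQ G}_m)^p$, not $\mu_p$. Indeed, if your four-term fppf sequence were exact, $d\log$ would embed ${\NZQ G}_m/\mu_p$ into $Z^1$; but ${\NZQ G}_m/\mu_p\iso{\NZQ G}_m$ by the very Kummer sequence you invoke, and ${\NZQ G}_m$ is not killed by $p$ whereas every section of $Z^1$ is --- a contradiction. The correct relation (Milne) is $H^i_{fl}(X,\mu_p)\iso H^{i-1}_{et}(X,\nu(1))$ with $\nu(1)={\NZQ G}_m/({\NZQ G}_m)^p\iso\Ker(C-1\colon Z^1\to\Omega_X^1)$, and this again computes only \'etale-part invariants. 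Second, the step you yourself flag as decisive --- that the image of $\alpha$ is exactly the indefinitely closed forms --- is described but never proved; it is the heart of Oda's argument. For calibration: the paper does not prove this theorem either; its proof is the one-line citation of Prop.~5.7 and Cor.~5.12 of \cite{Oda69} (the statement being Theorem~1.1 of \cite{LN80}), so a correct self-contained argument would have been a genuine addition, but what you have written is not one.
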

\begin{proof}
By Prop.~5.7 and Cor.~5.12~\cite{Oda69}.
\end{proof}

Now we have 

\begin{Theorem}
\label{noptorsion}
Let $X$ be  a Calabi-Yau variety $X$ of 
dimension $\geq 3$ over an algebraically closed field $k$ of 
characteristic $p>0$. Then,
$\Pic(X)$ (or $NS(X)$) has no $p$-torsion  if and only 
if $H^0(X, \Omega_X^1)=0$. Hence, in particular, 
weak $H^1$-Kodaira vanishing holds if $\Pic(X)$ has no
$p$-torsion.
\end{Theorem}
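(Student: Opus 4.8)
The plan is to feed the Calabi--Yau hypothesis directly into the exact sequence of Theorem~\ref{oda69} and read both sides of the equivalence off from it. For a Calabi--Yau variety of dimension $\geq 3$ one has $H^1(X,\OO_X)=0$, so substituting this into
\[
0\To H^1(X,\OO_X)\To DM({}_p\underline{\Pic}(X))\overset{\alpha}{\To} H^0(X,\Omega_X^1)
\]
makes $\alpha$ injective. Hence $\alpha$ identifies $DM({}_p\underline{\Pic}(X))$ with its image, which by Theorem~\ref{oda69} is precisely the space of indefinitely closed global $1$-forms inside $H^0(X,\Omega_X^1)$. Since ${}_p\underline{\Pic}(X)$ is a finite group scheme (its dimension is $h^1(\OO_X)=0$) and the Dieudonn\'e functor is an anti-equivalence, $DM({}_p\underline{\Pic}(X))=0$ if and only if ${}_p\underline{\Pic}(X)=0$, i.e. if and only if the Picard scheme (equivalently $NS(X)$, since $\Pic^0_{\mathrm{red}}$ is a point when $H^1(\OO_X)=0$) has no $p$-torsion. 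Thus ``$\Pic(X)$ has no $p$-torsion'' is translated into ``there is no nonzero indefinitely closed global $1$-form''.

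It then remains to compare this with ``$H^0(X,\Omega_X^1)=0$''. One implication is immediate: if $H^0(X,\Omega_X^1)=0$ there are a fortiori no indefinitely closed forms, hence no $p$-torsion. The content of the theorem is the reverse implication, and this is where I expect the main obstacle. I would need to show that on a Calabi--Yau variety \emph{every} global $1$-form is indefinitely closed, so that the image of $\alpha$ is all of $H^0(X,\Omega_X^1)$ and $\alpha$ is an isomorphism $H^0(X,\Omega_X^1)\iso DM({}_p\underline{\Pic}(X))$; granting this, the equivalence follows in both directions at once.

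To prove that every global $1$-form is indefinitely closed I would first reduce to plain closedness: if $d\omega=0$ for all $\omega\in H^0(X,\Omega_X^1)$, then the Cartier operator $C$ maps $H^0(X,\Omega_X^1)$ into itself, every form in its image is again a global $1$-form and hence closed, and iterating gives $d\omega=dC\omega=dC^2\omega=\cdots=0$, which is exactly the indefinite closedness of Theorem~\ref{oda69}. The crux is therefore the single statement that the de Rham differential $d\colon H^0(X,\Omega_X^1)\to H^0(X,\Omega_X^2)$ vanishes. This is precisely where the triviality $K_X=0$ must enter: in characteristic $0$ it is immediate from Hodge theory, but here degeneration of the Hodge--de Rham spectral sequence is unavailable for the small primes $p=2,3$ of interest, so the vanishing of this differential (equivalently $E_2^{1,0}=E_1^{1,0}$ in that spectral sequence) cannot be taken for granted. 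I would try to exploit the nowhere-vanishing volume form trivializing $\omega_X$ to rewrite $d\omega$ as a global section of a sheaf that admits none, or else invoke a closedness result for $1$-forms on varieties with numerically trivial canonical class; making this closedness rigorous is the hard part.

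The final assertion is then formal: when $\Pic(X)$ has no $p$-torsion the equivalence just established gives $H^0(X,\Omega_X^1)=0$, whence weak $H^1$-Kodaira vanishing holds by Theorem~\ref{takCALGmain}. I would also record the caveat that ``no $p$-torsion'' is safest read at the level of the group scheme ${}_p\underline{\Pic}(X)$ — which is what the Dieudonn\'e module detects, including any infinitesimal part — and that the ``$\Pic(X)$'' and ``$NS(X)$'' phrasings agree here because $H^1(\OO_X)=0$ forces $\dim\Pic^0(X)=0$.
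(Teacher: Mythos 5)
Your framework coincides with the paper's: both arguments run through the Oda/Lang--Nygaard exact sequence of Theorem~\ref{oda69}. Your easy direction is correct ($H^1(X,\OO_X)=0$ makes $\alpha$ injective, so $H^0(X,\Omega_X^1)=0$ forces $DM({}_p\underline{\Pic}(X))=0$ and hence, by the anti-equivalence for finite group schemes, ${}_p\underline{\Pic}(X)=0$); the paper obtains the same implication by citing Lemma~4.7 of \cite{vdGK03}. Your observation that $H^1(X,\OO_X)=0$ kills $\Pic^0$, so that the group-scheme and abstract-group readings of ``no $p$-torsion'' agree, is also right, and your reduction of indefinite closedness to plain $d$-closedness (a closed global $1$-form has a Cartier image which is again a global $1$-form; iterate) is fine.

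The gap is exactly where you flag it: you never prove that every global $1$-form on $X$ is closed, i.e.\ that $d\colon H^0(X,\Omega_X^1)\to H^0(X,\Omega_X^2)$ vanishes, and the strategies you sketch (exploiting the trivialization of the canonical sheaf, or an unnamed closedness theorem for varieties with $K_X=0$) are not carried out. This is the actual content of the hard direction; in characteristic $p$, and in particular for the relevant primes $p=2,3$, it cannot be extracted from $K_X=0$ by a formal Hodge-theoretic argument, since Hodge--de Rham degeneration is unavailable. The paper does not prove this step from scratch either: it quotes Proposition~4.1 of van der Geer--Katsura \cite{vdGK03}, invoked after assuming that $\Pic(X)$ has no $p$-torsion, to conclude that all global $1$-forms are indefinitely closed, whence $H^0(X,\Omega_X^1)=\alpha(DM({}_p\underline{\Pic}(X)))=\alpha(0)=0$. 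Note also that the paper needs the closedness statement only under the no-$p$-torsion hypothesis, while your plan asks for the unconditional version (so as to get ``both directions at once''), which is more than the cited literature supplies and more than is necessary. As it stands, your proposal is a correct reduction plus an unproved crux; to complete it you must either cite or reprove Proposition~4.1 of \cite{vdGK03}.
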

\begin{proof}
Note that  we have $\Pic(X) = NS(X)$ since $H^1(X,\OO_X)=0$.
By Lemma~4.7~\cite{vdGK03}, $\Pic(X)$ has 
no $p$-torsion if  $H^0(X, \Omega_X^1)=0$.
On the other hand, assume that $\Pic(X)$ has no $p$-torsion.
By Proposition~4.1~\cite{vdGK03}, all global
$1$-forms are indefinitely closed. Hence by 
Theorem~\ref{oda69} 
$H^0(X, \Omega_X^1) = \alpha(DM(\hbox{}_p\underline{Pic}(X)))=0$
as required.
\end{proof}

\begin{Remark}
For a Calabi-Yau variety $X$ of dimension $2$, i.e., K3 surface,
it is well known that $\Pic(X)$ has no torsion and $H^0(X, \Omega_X^1)=0$
(see \cite{A74, RS76}).
\end{Remark}

\begin{Corollary}
Let $X$ be a Hirokado variety or Schr\"oer variety.
Then $\Pic(X)$ has no $p$-torsion.
\end{Corollary}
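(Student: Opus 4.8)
The plan is to apply Theorem~\ref{noptorsion}, which reduces the claim to showing that $H^0(X, \Omega_X^1)=0$ for each of these varieties. This is exactly the hypothesis under which weak $H^1$-Kodaira vanishing was established, so the vanishing of global $1$-forms has effectively already been verified for both families in the course of proving Corollary~\ref{schrvar}. Thus the corollary should follow by simply invoking the equivalence in Theorem~\ref{noptorsion} in the direction ``$H^0(X,\Omega_X^1)=0$ implies $\Pic(X)$ has no $p$-torsion.''

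Concretely, first I would recall that a Schr\"oer variety is a pencil of smooth supersingular K3 surfaces over $\PP^1$, so it is a smooth K3 pencil in the sense of Lemma~\ref{fibratio}; that lemma then yields $H^0(X,\Omega_X^1)=0$ directly. For the Hirokado variety, the vanishing $H^0(X,\Omega_X^1)=0$ is the key input already used in \cite{TakCALG} when applying Theorem~\ref{takCALGmain}, so I would cite that computation rather than redo it. Having $H^0(X,\Omega_X^1)=0$ in hand for both cases, I would then apply Theorem~\ref{noptorsion}: since $X$ is a Calabi-Yau threefold (hence of dimension $\geq 3$) over an algebraically closed field of characteristic $p>0$, the theorem gives that $\Pic(X)$ has no $p$-torsion precisely when $H^0(X,\Omega_X^1)=0$, which we have verified.

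There is no genuine obstacle here beyond correctly matching hypotheses: the entire substance of the corollary is already packaged in Theorem~\ref{noptorsion} together with the global $1$-form computations that underlie Corollary~\ref{schrvar}. The only thing to be careful about is that Theorem~\ref{noptorsion} is stated for Calabi-Yau \emph{varieties} of dimension $\geq 3$, so I would note that both the Hirokado variety and the Schr\"oer varieties are genuine Calabi-Yau threefolds (smooth, projective, with $K_X=0$ and $H^i(X,\OO_X)=0$ for $i=1,2$), which places them squarely within the scope of that theorem. With this observation the proof is essentially a one-line deduction.
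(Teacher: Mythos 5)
Your proposal is correct and matches the paper's intended argument: the paper states this corollary without proof precisely because it follows immediately from Theorem~\ref{noptorsion} combined with the vanishing $H^0(X,\Omega_X^1)=0$, which is supplied by Lemma~\ref{fibratio} for Schr\"oer varieties (smooth K3 pencils) and by the computation in \cite{TakCALG} for the Hirokado variety. Your care in checking that both varieties are Calabi-Yau threefolds of dimension $\geq 3$, so that Theorem~\ref{noptorsion} applies, is exactly the right hypothesis-matching.
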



\end{document}